\definecolor{darkred}{rgb}{0.7,0.1,0.1}
\definecolor{darkblue}{rgb}{0.1,0.1,0.4}
\definecolor{darkgrey}{rgb}{0.5,0.5,0.5}
\numberwithin{equation}{section}
\theoremstyle{plain}
\newtheorem{thm}{Theorem}[section]
\newtheorem{lem}[thm]{Lemma}
\newtheorem{prop}[thm]{Proposition}
\newtheorem{cor}[thm]{Corollary}
\newtheorem{definition}[thm]{Definition}
\theoremstyle{remark}
\newtheorem{remark}[thm]{Remark}
\newtheorem{example}[thm]{Example}
\theoremstyle{plain}
\newcommand{\hyp}[1]{$C^{2}$-hypersurface as in Definition~\ref{definition_hypersurface}}
\newcommand{\dom}{\mathrm{dom}\,}
\begin{document}
\title[]{Approximation of magnetic Schr\"odinger operators with $\delta$-interactions supported on networks}
\author[]{}


\author[M. Holzmann]{Markus Holzmann}
\address{Institut f\"{u}r Angewandte Mathematik\\
Technische Universit\"{a}t Graz\\
 Steyrergasse 30, 8010 Graz, Austria\\
E-mail: {\tt holzmann@math.tugraz.at}}

\begin{abstract}
  This paper deals with the approximation of a magnetic Schr\"odinger operator with a singular $\delta$-potential that is formally given by $(i \nabla + A)^2 + Q + \alpha \delta_\Sigma$ by Schr\"odinger operators with regular potentials in the norm resolvent sense. This is done for $\Sigma$ being the finite union of $C^2$-hypersurfaces, for coefficients $A$, $Q$, and $\alpha$ under almost minimal assumptions such that the associated quadratic forms are closed and sectorial, and $Q$ and $\alpha$ are allowed to be complex-valued functions. In particular, $\Sigma$ can be a graph in $\mathbb{R}^2$ or the boundary of a piecewise $C^2$-domain. Moreover, spectral implications of the mentioned convergence result are discussed.  
\end{abstract}

\keywords{Schr\"odinger operator; singular potential; approximation in norm resolvent sense}

\subjclass[2020]{Primary 81Q10; Secondary 35Q40} 
\maketitle

\section{Introduction}

Schr\"odinger operators with singular $\delta$-potentials supported on sets of measure zero play an important role in mathematical physics. Singular potentials supported on points were already used in the early days of quantum mechanics \cite{KP31, T35} and later systematically analyzed from a mathematical point of view, see the monograph \cite{AGHH05} and the references therein. In the last three decades, there was a substantial interest in singular potentials supported on hypersurfaces $\Sigma$ in $\mathbb{R}^n$, $n \geq 2$, such as curves in $\mathbb{R}^2$ or surfaces in $\mathbb{R}^3$, see the review paper \cite{E08}, the monograph \cite{EK15}, or the research papers \cite{BEHL19, BEL14, BLL13, BEKS94, E05, EI01, EK03, MPS16}, to mention just a few selected works.
Such operators are formally given by
\begin{equation} \label{def_H_alpha_formal_intro}
  H_{A,Q,\alpha} = (i \nabla + A)^2 + Q + \alpha \delta_\Sigma,
\end{equation}
and they describe for attractive interactions (i.e. for $\alpha< 0$) the propagation of a quantum particle on $\Sigma$ under the influence of a magnetic vector potential $A$ and an electric potential $Q$ allowing the tunneling of the particle between different parts of $\Sigma$, see \cite{BEHL19, E08}; from this point of view, graph like structures $\Sigma \subset \mathbb{R}^2$ are of particular interest. Moreover, operators of the form $H_{0,0,\alpha}$ are useful in the design of photonic crystals with prescribed spectral gaps \cite{FK98, HL18}.

Of course, the operators with the singular potentials are only idealized replacements of more realistic operators with regular potentials. The relation of the regular and the singular models was justified via an approximation procedure already in \cite{KP31}. For $A=Q=0$ the approximation of one, two, and three-dimensional Schr\"odinger operators with $\delta$-point interactions by Schr\"odinger operators with scaled regular potentials in the norm resolvent sense was treated systematically in the 1970s and 1980s, see the monograph \cite{AGHH05} and the references therein; the latter convergence implies, in particular, the convergence of the spectra and of functions of the involved operators \cite{K95, W00} and thus, it justifies the usage of the idealized models as good approximations of their regular counterparts. The approximation of Schr\"odinger operators with $\delta$-potentials supported on the sphere in $\mathbb{R}^3$ was  considered in \cite{AGS87, S92}, then the situation that $\Sigma$ is the boundary of a star shaped domain and $\alpha$ is real-valued and smooth was investigated in \cite{P95}, and unbounded and $C^2$-smooth $\Sigma$ in $\mathbb{R}^2$ and $\mathbb{R}^3$ and constant $\alpha \in \mathbb{R}$ were treated in \cite{EI01, EK03}. It is worth mentioning that in \cite{SV96} Laplacians with $\delta$- (and other measure valued) potentials supported on quite general geometries were approximated, but only in the strong resolvent sense, which does not imply the convergence of the spectra of the approximating operators in the Hausdorff metric. Finally, in \cite{BEHL17} the approximation of the Schr\"odinger operator with a $\delta$-potential of real-valued strength $\alpha \in L^\infty(\Sigma)$ on one bounded or unbounded $C^2$-smooth hypersurface $\Sigma \subset \mathbb{R}^n$, $n \geq 2$, was shown, and in \cite{BEHL19} the approximation of a two-dimensional Schr\"odinger operator with a homogeneous magnetic field and an electric $\delta$-potential of real-valued strength $\alpha \in L^\infty(\Sigma)$ supported on a bounded $C^{1,1}$-smooth curve was treated; note that \cite{BEHL19} is the only work known to the author where the approximation of $H_{A,Q,\alpha}$ for $A \neq 0$ was treated.

The main motivation in this paper is to show that the operator $H_{A,Q,\alpha}$ formally given by~\eqref{def_H_alpha_formal_intro} can be approximated in the norm resolvent sense by Schr\"odinger operators with scaled regular potentials under mild assumptions on $A, Q$, and $\alpha$, and for a wide class of interaction supports $\Sigma$. Concerning the coefficients, from the point of view of quantum mechanics real-valued functions that are associated with self-adjoint operators are favored, but the analysis in this paper works equally well for complex-valued functions $Q$ and $\alpha$, yielding a relation to works on Schr\"odinger operators with non-real $\delta$-potentials \cite{BLLR18, F17, KK17}. Moreover, the interaction support $\Sigma$ will be allowed to be the finite union of bounded or unbounded $C^2$-smooth hypersurfaces, and so classes of graph-like structures in $\mathbb{R}^2$, that are particularly important for the application to leaky quantum graphs, and piecewise $C^2$-smooth $\Sigma \subset \mathbb{R}^n$, which are an important subclass of Lipschitz  domains, are contained.

To set the stage in a rigorous way, let $N, n \in \mathbb{N}$ be fixed with $n \geq 2$ and $\Sigma^{(1)}, \dots, \Sigma^{(N)} \subset \mathbb{R}^n$ be bounded or unbounded hypersurfaces as in Definition~\ref{definition_hypersurface} with normal vector fields $\nu^{(1)}, \dots, \nu^{(N)}$ such that the Hausdorff measure of $\Sigma^{(k)} \cap \Sigma^{(l)}$ is zero for $k \neq l$. Then, the network $\Sigma$ is
\begin{equation} \label{Sigma_intro}
  \Sigma := \bigcup_{k=1}^N \Sigma^{(k)}.
\end{equation}
Moreover, assume that 
\begin{equation} \label{assumption_A}
  A \in L^2_{\textup{loc}}(\mathbb{R}^n; \mathbb{R}^n)
\end{equation}
and $Q = Q_1 + Q_2$ with
\begin{equation} \label{assumption_Q}
  0 \leq Q_1 \in L^1_{\textup{loc}}(\mathbb{R}^n), \, Q_2 \in L^{p'}(\mathbb{R}^n) + L^\infty(\mathbb{R}^n) \text{ with } p' \begin{cases} \geq n/2, &  n>2, \\ > 1, & n=2, \end{cases}
\end{equation}
are fixed. Note that $Q_1$ is automatically real-valued, while $Q_2$ is allowed to be complex-valued. Next, define the space 
\begin{equation} \label{def_H_A_Q_intro}
  \mathcal{H}_{A, Q} := \left\{ f \in L^2(\mathbb{R}^n): |(i \nabla + A) f|, Q_1^{1/2} f \in L^2(\mathbb{R}^n) \right\}.
\end{equation}
This space is studied in detail in Section~\ref{section_Sobolev_space}, where it is shown that $\mathcal{H}_{A, Q}$, when endowed with a proper norm, is a Hilbert space and that the domain of the quadratic form which is associated with the multiplication by $Q_2$ contains $\mathcal{H}_{A, Q}$; cf. Lemma~\ref{lemma_Q_2}. Moreover, in Section~\ref{section_hypersurfaces} it is proved that functions $f \in \mathcal{H}_{A, Q}$ possess Dirichlet traces $f|_{\Sigma^{(k)}} \in L^q(\Sigma^{(k)})$ with $q = 2 \frac{n-1}{n-2+\mu}$ and  $\mu \in (0,1]$.
Eventually, assume that 
\begin{equation} \label{assumption_alpha}
  \alpha := (\alpha^{(k)})_{k=1}^N \in L^p(\Sigma) + L^\infty(\Sigma) := \bigoplus_{k=1}^N \bigl(L^p(\Sigma^{(k)}) + L^\infty(\Sigma^{(k)})\bigr) 
\end{equation}
with $p = \frac{n-1}{1-\gamma}$,
where $\gamma \in (0,1)$ is fixed. To introduce the Schr\"odinger operator with a $\delta$-potential of strength $\alpha$ supported on the network $\Sigma$, consider the quadratic form
\begin{equation} \label{def_H_alpha_form_intro}
  \mathfrak{h}_{A,Q,\alpha}[f] := \int_{\mathbb{R}^n} \bigl(\bigl|(i \nabla + A) f\bigr|^2 + Q |f|^2 \bigr) \textup{d} x + \int_\Sigma \alpha \bigl|f|_\Sigma\bigr|^2 \textup{d} \sigma, \quad \dom \mathfrak{h}_{A,Q,\alpha} = \mathcal{H}_{A, Q},
\end{equation}
where the shortcut notation $\int_\Sigma \alpha |f|_\Sigma|^2 \textup{d} \sigma := \sum_{k=1}^N \int_{\Sigma^{(k)}} \alpha^{(k)} |f|_{\Sigma^{(k)}}|^2 \textup{d} \sigma$ is used. We will see in Proposition~\ref{proposition_delta_form} that $\mathfrak{h}_{A,Q,\alpha}$ is a densely defined, closed, and sectorial form in $L^2(\mathbb{R}^n)$. The associated $m$-sectorial operator $H_{A, Q, \alpha}$ is the rigorous mathematical operator that is associated with the formal expression~\eqref{def_H_alpha_formal_intro}.

In order to construct the sequence of operators $H_{A, Q, V_\varepsilon}$ that converges to $H_{A, Q, \alpha}$ in the norm resolvent sense, note first that there exists $\beta > 0$ such that for each $k \in \{ 1, \dots, N \}$ the mapping
\begin{equation} \label{def_iota_k_intro}
  \iota^{(k)}: \Sigma^{(k)} \times (-\beta, \beta) \rightarrow \mathbb{R}^n, \quad \iota^{(k)}(x_{\Sigma^{(k)}}, t) := x_{\Sigma^{(k)}} + t \nu^{(k)}(x_{\Sigma^{(k)}}),
\end{equation}
is injective, see Proposition~\ref{proposition_tubular_neighbourhood}. Introduce for $\varepsilon \in (0, \beta]$ the set $\Omega_{\varepsilon}^{(k)} := \iota^{(k)}(\Sigma^{(k)} \times (-\varepsilon, \varepsilon))$, and choose a function $V^{(k)} \in L^p(\mathbb{R}^n) + L^\infty(\mathbb{R}^n)$ with $p = \frac{n-1}{1-\gamma}$ for a $\gamma \in (0,1)$ that is supported in $\Omega_{\beta}^{(k)}$. Then, define for $k \in \{ 1, \dots, N \}$
\begin{equation*}
  V_{\varepsilon}^{(k)}(x) := \begin{cases} \frac{\beta}{\varepsilon} V^{(k)}(x_{\Sigma^{(k)}} + \frac{\beta}{\varepsilon} t \nu^{(k)}(x_{\Sigma^{(k)}})), & \text {if } x = x_{\Sigma^{(k)}} + t \nu^{(k)}(x_{\Sigma^{(k)}}) \in \Omega_{\varepsilon}^{(k)}, \\ 0, & \text{if } x \notin \Omega_{\varepsilon}^{(k)}, \end{cases} 
\end{equation*}
and
\begin{equation} \label{def_V_epsilon}
  V_\varepsilon := \sum_{k=1}^N V_{\varepsilon}^{(k)};
\end{equation}
it follows from Proposition~\ref{proposition_tubular_neighbourhood}~(iv) that $V_\varepsilon \in L^p(\mathbb{R}^n) + L^\infty(\mathbb{R}^n)$. Consider the quadratic form
\begin{equation} \label{def_H_V_eps_form_intro}
  \mathfrak{h}_{A,Q,V_\varepsilon}[f] := \int_{\mathbb{R}^n} \bigl(\bigl|(i \nabla + A) f\bigr|^2 + (Q + V_\varepsilon) |f|^2 \bigr) \textup{d} x, \quad \dom \mathfrak{h}_{A,Q,\alpha} = \mathcal{H}_{A, Q}.
\end{equation}
It will turn out as a byproduct in the proof of Theorem~\ref{theorem_approximation} that for sufficiently small $\varepsilon > 0$ the form $\mathfrak{h}_{A,Q,V_\varepsilon}$ is densely defined, closed, and sectorial in $L^2(\mathbb{R}^n)$. The associated $m$-sectorial operator $H_{A, Q, V_\varepsilon}$ is the rigorous mathematical operator that is associated with the expression $(i\nabla + A)^2 + Q + V_\varepsilon$. Now, we are prepared to formulate the main result of this paper:

\begin{thm} \label{theorem_approximation}
  Let $A$ and $Q = Q_1 +Q_2$ be as in~\eqref{assumption_A} and~\eqref{assumption_Q}, respectively, $\Sigma$ be as in~\eqref{Sigma_intro}, and  $V^{(k)} \in L^p(\mathbb{R}^n) + L^\infty(\mathbb{R}^n)$ with $p = \frac{n-1}{1-\gamma}$ for a $\gamma \in (0,1)$ be supported in $\Omega_{\beta}^{(k)}$. Let $V_\varepsilon$ be defined for $\varepsilon \in (0, \beta)$ as in~\eqref{def_V_epsilon} and introduce $\alpha = (\alpha^{(k)})_{k=1}^N \in L^p(\Sigma) + L^\infty(\Sigma)$ by
  \begin{equation} \label{alpha_limit}
    \alpha^{(k)}(x_{\Sigma^{(k)}}) := \int_{-\beta}^\beta V^{(k)}(x_{\Sigma^{(k)}} + t \nu^{(k)}(x_{\Sigma^{(k)}})) \textup{d} t.
  \end{equation}
  Eventually, let $H_{A, Q, \alpha}$ be the $m$-sectorial operator that is associated with the form in~\eqref{def_H_alpha_form_intro}, and let $\gamma_1 \in (0, 2 \gamma] \cap (0,1)$. Then, there exists $\varepsilon_0 \in (0, \beta)$ such that for all $\varepsilon \in (0, \varepsilon_0)$ the form $\mathfrak{h}_{A, Q, V_\varepsilon}$ in~\eqref{def_H_V_eps_form_intro} is closed and sectorial, and for the associated $m$-sectorial operators $H_{A, Q, V_\varepsilon}$ there exist $\lambda_0 < 0$  such that $(-\infty, \lambda_0) \subset \rho(H_{A, Q, \alpha}) \cap \bigcap_{\varepsilon \in (0, \varepsilon_0)} \rho(H_{A, Q, V_{\varepsilon}})$ and for all $\lambda < \lambda_0$ and $\varepsilon \in (0, \varepsilon_0)$ the estimate
  \begin{equation} \label{equation_convergence}
    \big\| (H_{A, Q, \alpha} - \lambda)^{-1} - (H_{A, Q, V_\varepsilon} - \lambda)^{-1} \big\| \leq C \varepsilon^{\gamma_1/2}
  \end{equation}
  with a constant $C = C(\lambda, \varepsilon_0) > 0$ is true. In particular, $H_{A, Q, V_\varepsilon}$ converges in the norm resolvent sense to $H_{A, Q, \alpha}$, as $\varepsilon \searrow 0$.
\end{thm}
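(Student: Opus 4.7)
The key observation is that both quadratic forms $\mathfrak{h}_{A,Q,\alpha}$ and $\mathfrak{h}_{A,Q,V_\varepsilon}$ share the common domain $\mathcal{H}_{A,Q}$, so (in sesquilinear form) their difference is simply
$$W_\varepsilon[u,v] := \int_\Sigma \alpha\, u|_\Sigma \overline{v|_\Sigma}\,\textup{d}\sigma - \int_{\mathbb{R}^n} V_\varepsilon\, u \bar v\,\textup{d} x, \qquad u,v \in \mathcal{H}_{A,Q}.$$
My first step would be to verify that for small enough $\varepsilon$ the form $\mathfrak{h}_{A,Q,V_\varepsilon}$ is closed and sectorial; this is analogous to the $\alpha$-case (Proposition~\ref{proposition_delta_form}) once one shows that $\int V_\varepsilon |f|^2\,\textup{d} x$ is form-small with respect to $\int(|(i\nabla+A)f|^2 + Q_1|f|^2)\,\textup{d} x$ uniformly in $\varepsilon$, which in turn follows from a tubular change of variables together with the trace estimates of Section~\ref{section_hypersurfaces}. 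For $\lambda \ll 0$ both forms become coercive, yielding a common half-line in the resolvent sets. Setting $u := (H_{A,Q,V_\varepsilon}-\lambda)^{-1}f$ and $v := ((H_{A,Q,\alpha}-\lambda)^*)^{-1}g$ (both in $\mathcal{H}_{A,Q}$ with norms $\leq C\|f\|_{L^2}$, $C\|g\|_{L^2}$ for $\lambda$ sufficiently negative), the standard form-based resolvent identity yields
$$\bigl\langle [(H_{A,Q,\alpha}-\lambda)^{-1}-(H_{A,Q,V_\varepsilon}-\lambda)^{-1}]f,\,g\bigr\rangle = W_\varepsilon[u,v].$$
The problem thereby reduces to the estimate $|W_\varepsilon[u,v]| \leq C\varepsilon^{\gamma_1/2}\|u\|_{\mathcal{H}_{A,Q}}\|v\|_{\mathcal{H}_{A,Q}}$.

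For the latter, I would work separately on each $\Sigma^{(k)}$. The substitution $t = \tfrac{\varepsilon}{\beta} s$ in $\Omega_\varepsilon^{(k)}$ transforms
$$\int_{\Omega_\varepsilon^{(k)}} V_\varepsilon^{(k)} u \bar v\,\textup{d} x = \int_{\Sigma^{(k)}}\!\int_{-\beta}^\beta V^{(k)}(x_\Sigma + s\nu^{(k)})\,u(\iota^{(k)}(x_\Sigma,\tfrac{\varepsilon}{\beta} s))\,\overline{v(\iota^{(k)}(x_\Sigma,\tfrac{\varepsilon}{\beta} s))}\,J_\varepsilon\,\textup{d} s\,\textup{d}\sigma,$$
where the Jacobian satisfies $J_\varepsilon = 1 + O(\varepsilon)$ uniformly on $\Sigma^{(k)}\times(-\beta,\beta)$. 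The fundamental theorem of calculus,
$$u(\iota^{(k)}(x_\Sigma,\tfrac{\varepsilon}{\beta} s)) - u|_{\Sigma^{(k)}}(x_\Sigma) = \int_0^{\varepsilon s/\beta}\partial_\tau [u\circ \iota^{(k)}](x_\Sigma,\tau)\,\textup{d}\tau,$$
combined with the definition~\eqref{alpha_limit} of $\alpha^{(k)}$, subtracts the ``$\delta$-term'' and leaves remainders of two kinds: those produced by $J_\varepsilon - 1$ (multiplied by traces), and those containing the normal derivative integrals. The first type is $O(\varepsilon)\|u|_\Sigma\|_{L^q}\|v|_\Sigma\|_{L^q}$ via H\"older with $q = 2\frac{n-1}{n-2+\gamma}$, which is conjugate (in the sense $2/q + 1/p = 1$) to $p = \frac{n-1}{1-\gamma}$, and is controlled by the trace estimate of Section~\ref{section_hypersurfaces}. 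The second, after Cauchy--Schwarz in $\tau$, gains a factor $\sqrt{\varepsilon|s|/\beta}$ and reduces to an $L^2$-bound for the gauge-covariant normal derivative of $u$ on the thin tube.

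The principal obstacle is the interplay between the exponent $\gamma_1$ and the decomposition $V^{(k)} \in L^p(\mathbb{R}^n) + L^\infty(\mathbb{R}^n)$. For the $L^\infty$ part the Cauchy--Schwarz step readily yields an $\varepsilon^{1/2}$-rate. For the $L^p$ part one cannot afford the full $L^2$-norm of $\partial_\tau u$; instead one balances H\"older's inequality (with exponent conjugate to $p$) against an interpolation between the trace of $u$ on $\Sigma^{(k)}$ and the thickened-tube $L^2$-bound for $(i\nabla+A)u$. The range $\gamma_1 \in (0,2\gamma]$ corresponds exactly to how much of the available $L^p$-H\"older margin one converts into an $\varepsilon$-rate versus how much is spent on trace regularity, which is the precise reason for the constraint $\gamma_1 \leq 2\gamma$. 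The magnetic potential enters through $\partial_\tau(u\circ\iota^{(k)}) = [(i\nabla+A)u]\cdot\nu^{(k)} - i(A\cdot\nu^{(k)})\,u$; the first summand is bounded in $L^2$ by $\|u\|_{\mathcal{H}_{A,Q}}$, while the $(A\cdot\nu^{(k)})u$ remainder is absorbed using $A \in L^2_{\textup{loc}}(\mathbb{R}^n;\mathbb{R}^n)$ and H\"older on the thin tube. Finally, since the pairwise intersections $\Sigma^{(k)} \cap \Sigma^{(l)}$ have zero Hausdorff measure, the sum of local estimates is directly a global bound, and the resulting $\varepsilon^{\gamma_1/2}\|f\|_{L^2}\|g\|_{L^2}$ implies~\eqref{equation_convergence}; norm resolvent convergence is then immediate.
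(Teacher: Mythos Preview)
Your overall architecture---form difference, tubular change of variables, splitting off the Jacobian error $J_\varepsilon - 1$, then a ``shift'' estimate for the remaining piece---matches the paper's. The genuine gap is in how you propose to control the shift term. You invoke the fundamental theorem of calculus on $u$ itself and then split
\[
\partial_\tau(u\circ\iota^{(k)}) = \bigl[(i\nabla+A)u\bigr]\cdot\nu^{(k)} - i(A\cdot\nu^{(k)})u,
\]
claiming the second summand ``is absorbed using $A\in L^2_{\textup{loc}}$ and H\"older on the thin tube.'' Under the hypothesis $A\in L^2_{\textup{loc}}$ this does not work: with $u$ controlled only in $L^2$ (or even in $L^{2n/(n-2)}$ via diamagnetic plus Sobolev), the product $(A\cdot\nu)u$ is at best in $L^1_{\textup{loc}}$, not in $L^2$ on the tube, and there is no smallness to be gained from shrinking $\varepsilon$ because $A$ may concentrate on $\Sigma$. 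In fact $u\in\mathcal{H}_{A,Q}$ need not lie in $H^1_{\textup{loc}}$ at all, so the pointwise normal derivative you write down is not an $L^2$ object. Your subsequent Cauchy--Schwarz and H\"older steps therefore have nothing to act on. The same obstruction blocks the ``interpolation between the trace of $u$ and the thickened-tube $L^2$-bound for $(i\nabla+A)u$'' that you sketch for the $L^p$ part.

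The paper's fix is to abandon the sesquilinear route and estimate only the \emph{quadratic} form $\mathfrak{a}[f]=\int V_\varepsilon|f|^2-\int_\Sigma\alpha|f|_\Sigma|^2$, which involves only $|f|^2$. The diamagnetic inequality (Lemma~\ref{lemma_diamagnetic_inequality}) gives $|f|\in H^1(\mathbb{R}^n)$ with $\|\,|f|\,\|_{H^1}\leq\|f\|_{\mathcal{H}_{A,Q}}$, and this is what powers the key shift estimate (Lemma~\ref{lemma_shift}): one compares $|f|(\iota(\cdot,r))$ with $|f|(\iota(\cdot,0))$ in $L^q(\Sigma)$ using an honest $H^1$--interpolation argument applied to $|f|$, never to $f$. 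The factorization $|f(x)|^2-|f(y)|^2=(|f(x)|-|f(y)|)(|f(x)|+|f(y)|)$ then feeds directly into a three-function H\"older inequality with exponents $p,\ q_1=2\tfrac{n-1}{n-2+\gamma_1},\ q_2=2\tfrac{n-1}{n-2+\gamma_2}$ (where $\gamma_1+\gamma_2=2\gamma$), which is the precise origin of the constraint $\gamma_1\in(0,2\gamma]\cap(0,1)$. Once the quadratic bound $|\mathfrak{a}[f]|\leq C\varepsilon^{\gamma_1/2}(\textup{Re}\,\mathfrak{h}_{A,Q,\alpha}[f]+\|f\|_{L^2}^2)$ is in hand, Kato's Theorem~VI.3.4 (rather than an explicit resolvent identity) delivers~\eqref{equation_convergence} directly. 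If you want to keep your resolvent-identity framework, you can still recover the sesquilinear bound by polarization from the quadratic one; but the quadratic estimate itself must go through $|f|$ and diamagnetic, not through $\partial_\tau u$.
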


The proof of Theorem~\ref{theorem_approximation} follows a strategy that was used in \cite[Appendix~B]{BEHL19} and relies on an estimate of $\mathfrak{h}_{A,Q,V_\varepsilon} - \mathfrak{h}_{A,Q,\alpha}$ in a suitable topology and on applying an abstract result from \cite[Section~VI.3]{K95}. In particular, the fact that only estimates of the associated forms are used allows to treat non-smooth coefficients $A,Q,\alpha$ and networks $\Sigma$ and so,
Theorem~\ref{theorem_approximation} improves the results on the approximation of Schr\"odinger operators with $\delta$-potentials supported on hypersurfaces in several directions:
\begin{itemize}
  \item[(a)] The support of the $\delta$-potential is allowed to be a finite network as in~\eqref{Sigma_intro}. This includes, in particular, also the boundaries of piecewise $C^2$-domains, which are an important class of Lipschitz hypersurfaces, and in dimension $n=2$ also graphs and curves with cusps.
  \item[(b)] The interaction strengths are allowed to be non-real and belong to $L^p$-spaces with almost optimal $p$ so that the operator $H_{A, Q, \alpha}$ can be defined with the help of its associated quadratic form. 
  \item[(c)] Additive electric and magnetic potentials $Q$ and $A$, that are independent of $\varepsilon$, are allowed, and these potentials belong to optimal classes of potentials such that the unperturbed operator $H_{A, Q, 0}$ can be defined as an $m$-sectorial operator with the help of the associated form.
\end{itemize}
It is worth to note that also for the particularly important self-adjoint case Theorem~\ref{theorem_approximation} improves the known approximation results in all points (a)--(c) listed above, as in the previous literature similar results were discussed only for one hypersurface $\Sigma$ (instead of a network) in \cite{BEHL17, EI01, EK03, P95, S92}, bounded interaction strengths $\alpha \in L^\infty(\Sigma)$ in  \cite{BEHL17, EI01, EK03, P95, S92}, the homogeneous magnetic vector potential $A$ in dimension $d=2$ in \cite{BEHL19}, and bounded electric potentials $Q \in L^\infty(\mathbb{R}^d)$ in \cite{BEHL17}.

In Theorem~\ref{theorem_approximation} we start with given potentials $V^{(k)}$, which are used to construct the scaled potentials $V_\varepsilon$ in~\eqref{def_V_epsilon}, and show that the associated operators $H_{A,Q,V_\varepsilon}$ converge in the norm resolvent sense to $H_{A,Q,\alpha}$, where $\alpha$ is computed in terms of $V^{(k)}$ in~\eqref{alpha_limit}. In applications, also the inverse question is of relevance: Given a network $\Sigma$ of hypersurfaces and $\alpha \in L^p(\Sigma) + L^\infty(\Sigma)$, is it possible to approximate $H_{A,Q,\alpha}$ in the norm resolvent sense by operators of the form $H_{A,Q,V_\varepsilon}$? In the following corollary, which follows immediately from Theorem~\ref{theorem_approximation}, a positive answer to this question is given.

\begin{cor} \label{corollary_inverse_result}
  Let $A$ and $Q = Q_1 + Q_2$ be as in~\eqref{assumption_A} and~\eqref{assumption_Q}, respectively, $\Sigma$ be as in~\eqref{Sigma_intro}, and  $\alpha^{(k)} \in L^p(\Sigma^{(k)}) + L^\infty(\Sigma^{(k)})$ with $p = \frac{n-1}{1-\gamma}$ for a $\gamma \in (0,1)$. Let $V^{(k)} \in L^p(\mathbb{R}^n) + L^\infty(\mathbb{R}^n)$ be defined by
  \begin{equation} \label{V_corollary} 
    V^{(k)}(x) = \begin{cases} \frac{1}{2 \beta} \alpha(x_{\Sigma^{(k)}}), & \text{if } x = x_{\Sigma^{(k)}} + t \nu^{(k)}(x_{\Sigma^{(k)}}) \in \Omega_{\beta}^{(k)}, \\ 0, & \text{if } x \notin \Omega_{\beta}^{(k)}, \end{cases}
  \end{equation}
  and let $V_\varepsilon$ be as in~\eqref{def_V_epsilon}.
  Eventually, let $H_{A, Q, \alpha}$ be the $m$-sectorial operator that is associated with the form in~\eqref{def_H_alpha_form_intro}, and let $\gamma_1 \in (0, 2 \gamma] \cap (0,1)$. Then, there exists $\varepsilon_0 \in (0, \beta)$ such that for all $\varepsilon \in (0, \varepsilon_0)$ the form $\mathfrak{h}_{A, Q, V_\varepsilon}$ in~\eqref{def_H_V_eps_form_intro} is closed and sectorial, and for the associated $m$-sectorial operators $H_{A, Q, V_\varepsilon}$ there exist $\lambda_0 < 0$  such that $(-\infty, \lambda_0) \subset \rho(H_{A, Q, \alpha}) \cap \bigcap_{\varepsilon \in (0, \varepsilon_0)} \rho(H_{A, Q, V_{\varepsilon}})$ and for all $\lambda < \lambda_0$ and $\varepsilon \in (0, \varepsilon_0)$ the estimate
  \begin{equation*}
    \big\| (H_{A, Q, \alpha} - \lambda)^{-1} - (H_{A, Q, V_\varepsilon} - \lambda)^{-1} \big\| \leq C \varepsilon^{\gamma_1/2}
  \end{equation*}
  with a constant $C = C(\lambda, \varepsilon_0) > 0$ is true. In particular, $H_{A, Q, V_\varepsilon}$ converges in the norm resolvent sense to $H_{A, Q, \alpha}$, as $\varepsilon \searrow 0$.
\end{cor}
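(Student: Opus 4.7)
The strategy is to reduce the corollary to Theorem~\ref{theorem_approximation} by checking that the potentials $V^{(k)}$ built from $\alpha^{(k)}$ via~\eqref{V_corollary} satisfy the hypotheses of that theorem and reproduce the given $\alpha$ through the averaging formula~\eqref{alpha_limit}. No new analytic work beyond Theorem~\ref{theorem_approximation} itself should be needed.

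First, I would verify the integrability hypothesis $V^{(k)} \in L^p(\mathbb{R}^n) + L^\infty(\mathbb{R}^n)$. Writing $\alpha^{(k)} = \alpha_1^{(k)} + \alpha_2^{(k)}$ with $\alpha_1^{(k)} \in L^p(\Sigma^{(k)})$ and $\alpha_2^{(k)} \in L^\infty(\Sigma^{(k)})$, the construction~\eqref{V_corollary} gives a corresponding splitting $V^{(k)} = V_1^{(k)} + V_2^{(k)}$ where $V_2^{(k)}$ is bounded and supported in $\Omega_\beta^{(k)}$, hence in $L^\infty(\mathbb{R}^n)$. For $V_1^{(k)}$, one uses the change-of-variables formula in the tubular coordinates $\iota^{(k)}$ from~\eqref{def_iota_k_intro}; since the associated Jacobian is uniformly bounded on $\Sigma^{(k)} \times (-\beta, \beta)$ (this is the content of the tubular neighborhood proposition referenced in the paper), one obtains
\begin{equation*}
  \int_{\mathbb{R}^n} |V_1^{(k)}(x)|^p \, \textup{d} x \leq C \int_{\Sigma^{(k)}} \int_{-\beta}^\beta \Bigl|\tfrac{1}{2\beta} \alpha_1^{(k)}(x_{\Sigma^{(k)}})\Bigr|^p \, \textup{d} t \, \textup{d} \sigma(x_{\Sigma^{(k)}}) = \frac{C}{(2\beta)^{p-1}} \|\alpha_1^{(k)}\|_{L^p(\Sigma^{(k)})}^p,
\end{equation*}
so $V_1^{(k)} \in L^p(\mathbb{R}^n)$ as required. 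The support condition $\operatorname{supp} V^{(k)} \subset \Omega_\beta^{(k)}$ is built into the definition~\eqref{V_corollary}.

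Second, I would verify that the averaging formula~\eqref{alpha_limit} from Theorem~\ref{theorem_approximation}, applied to this $V^{(k)}$, reproduces the prescribed $\alpha^{(k)}$. This is just a direct calculation: by~\eqref{V_corollary}, for $t \in (-\beta,\beta)$ one has $V^{(k)}(x_{\Sigma^{(k)}} + t \nu^{(k)}(x_{\Sigma^{(k)}})) = \frac{1}{2\beta} \alpha^{(k)}(x_{\Sigma^{(k)}})$, and integration over $t \in (-\beta, \beta)$ yields exactly $\alpha^{(k)}(x_{\Sigma^{(k)}})$.

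Having verified both the integrability and the matching of limit strengths, I would invoke Theorem~\ref{theorem_approximation} directly: it produces the threshold $\varepsilon_0$, the resolvent set inclusion $(-\infty, \lambda_0) \subset \rho(H_{A,Q,\alpha}) \cap \bigcap_{\varepsilon} \rho(H_{A,Q,V_\varepsilon})$, and the resolvent estimate with rate $\varepsilon^{\gamma_1/2}$, which is precisely the conclusion of the corollary. Since all the real analytic work (Sobolev trace estimates, sectoriality of the approximating forms, and the form-difference bound) is carried out in the proof of Theorem~\ref{theorem_approximation}, there is essentially no separate obstacle here; the only point requiring a moment of care is the integrability check in the first paragraph, which hinges on the uniform Jacobian bound from the tubular neighborhood proposition.
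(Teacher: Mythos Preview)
Your proposal is correct and matches the paper's approach: the paper simply states that the corollary ``follows immediately from Theorem~\ref{theorem_approximation}'' without giving a separate proof, and you have spelled out the two routine verifications (that $V^{(k)} \in L^p(\mathbb{R}^n) + L^\infty(\mathbb{R}^n)$ via Proposition~\ref{proposition_tubular_neighbourhood}, and that~\eqref{alpha_limit} recovers the prescribed $\alpha^{(k)}$) that make this immediate.
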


As it is known that convergence of operators in the norm resolvent sense implies the convergence of the spectra of the associated operators \cite{K95, W00}, one can use the results from Theorem~\ref{theorem_approximation} and Corollary~\ref{corollary_inverse_result} to transfer results on the spectrum of $H_{A,Q,\alpha}$ to $H_{A,Q,V_\varepsilon}$. Some examples of this idea are demonstrated in Section~\ref{section_spectral_implications}; here, the focus lies on examples for which the improvements of this paper over the existing literature on the approximation of Schr\"odinger operators with singular potentials are necessary.

The paper is organized as follows. Section~\ref{section_prelim} is of preliminary nature. In Subsection~\ref{section_Sobolev_space} some useful properties of the space $\mathcal{H}_{A,Q}$ from~\eqref{def_H_A_Q_intro} are summarized, Subsection~\ref{section_hypersurfaces} is devoted to the class of $C^2$-hypersurfaces that build the network $\Sigma$, and in Subsection~\ref{section_H_delta} the operator $H_{A,Q,\alpha}$ formally given by~\eqref{def_H_alpha_formal_intro} is introduced. Section~\ref{section_convergence} is devoted to the proof of the main result of this paper, Theorem~\ref{theorem_approximation}.
Finally, in Section~\ref{section_spectral_implications} some spectral implications that follow from the convergence in Theorem~\ref{theorem_approximation} are discussed.

\subsection*{Notations.}

In this paper, $C$ denotes a generic constant that may change its value in between lines. For an open set $\Omega \subset \mathbb{R}^n$ the set $H^1(\Omega)$ is the $L^2$-based Sobolev space of once weakly differentiable functions and $C^k_b(\Omega)$, $k \in \mathbb{N}$, is the set of all $k$ times continuously differentiable functions with bounded partial derivatives up to order $k$. For a closed linear operator $A$ in a Hilbert space, its domain, resolvent set, spectrum, and point spectrum are denoted by $\dom A$, $\rho(A)$, $\sigma(A)$, and $\sigma_{\textup{p}}(A)$. If $A$ is self-adjoint, then its discrete and essential spectrum are $\sigma_{\textup{disc}}(A)$ and $\sigma_{\textup{ess}}(A)$, respectively.

\section{Preliminaries} \label{section_prelim}

\subsection{The space $\mathcal{H}_{A, Q}$} \label{section_Sobolev_space}

Throughout this subsection, let $A \in L^2_{\textup{loc}}(\mathbb{R}^n; \mathbb{R}^n)$
and $Q = Q_1 + Q_2$ with $0 \leq Q_1 \in L^1_{\textup{loc}}(\mathbb{R}^n)$, $Q_2 \in L^{p'}(\mathbb{R}^n) + L^\infty(\mathbb{R}^n)$, where $p' \geq \frac{n}{2}$ for $n>2$ and $p' > 1$ for $n=2$,
be as in~\eqref{assumption_A} and~\eqref{assumption_Q}, respectively. Note that $A$ and $Q_1$ are real-valued, while $Q_2$ is allowed to be complex-valued. In this subsection, some useful properties of the space
\begin{equation} \label{def_H_A_Q}
  \mathcal{H}_{A, Q} := \left\{ f \in L^2(\mathbb{R}^n): |(i \nabla + A) f|, Q_1^{1/2} f \in L^2(\mathbb{R}^n) \right\},
\end{equation}
where $(i \nabla + A) f$ is understood in the sense of distributions, are summarized. Define the norm
\begin{equation} \label{def_norm_H_A_Q}
  \| f \|_{\mathcal{H}_{A,Q}}^2 := \int_{\mathbb{R}^n} \bigl(|(i \nabla + A) f|^2 + (Q_1+1) |f|^2 \bigr) \textup{d} x.
\end{equation}
Basic properties of $\mathcal{H}_{A, Q}$ are stated in the following lemma, see \cite[Lemma~1 and Theorem~1]{LS81} for a proof.

\begin{lem} \label{lemma_H_A_Q_basic}
  The space $\mathcal{H}_{A, Q}$ endowed with the norm in~\eqref{def_norm_H_A_Q} is a Hilbert space and $C_0^\infty(\mathbb{R}^n)$ is dense in $\mathcal{H}_{A,Q}$.
\end{lem}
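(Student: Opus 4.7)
The plan is to prove the two assertions separately, first establishing that $\mathcal{H}_{A,Q}$ is complete under the norm~\eqref{def_norm_H_A_Q} and then using a cutoff-plus-mollification procedure for density. The inner product is $\langle f,g\rangle_{\mathcal{H}_{A,Q}} = \int_{\mathbb{R}^n}\bigl[(i\nabla+A)f\cdot\overline{(i\nabla+A)g} + (Q_1+1)f\bar g\bigr]\textup{d}x$, and the $+1$ in $(Q_1+1)$ together with $Q_1\geq 0$ makes it positive definite. For completeness, given a Cauchy sequence $(f_k)\subset\mathcal{H}_{A,Q}$, the definition of the norm directly yields that $(f_k)$, $((i\nabla+A)f_k)$, and $(Q_1^{1/2}f_k)$ are Cauchy in $L^2(\mathbb{R}^n)$, $L^2(\mathbb{R}^n;\mathbb{C}^n)$, and $L^2(\mathbb{R}^n)$, respectively, with respective limits $f$, $g$, $h$. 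To identify $g=(i\nabla+A)f$ distributionally, I test against $\varphi\in C_0^\infty(\mathbb{R}^n)$: since $A\in L^2_{\textup{loc}}$ one has $A\varphi\in L^2$, so $\langle f_k, i\nabla\varphi\rangle + \langle f_k, A\varphi\rangle \to \langle f, i\nabla\varphi\rangle + \langle f, A\varphi\rangle$, which identifies the distributional limit. To identify $h=Q_1^{1/2}f$, I pass to a subsequence with $f_{k_j}\to f$ pointwise a.e., whence $Q_1^{1/2}f_{k_j}\to Q_1^{1/2}f$ a.e., and a further subsequence argument for the $L^2$-limit $h$ yields $h=Q_1^{1/2}f$ a.e.

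For density, I would first truncate. Fix $\chi\in C_0^\infty(\mathbb{R}^n)$ with $\chi\equiv 1$ on $B(0,1)$ and $\textup{supp}\,\chi\subset B(0,2)$, and set $\chi_R(x):=\chi(x/R)$. For $f\in\mathcal{H}_{A,Q}$, the product rule gives
\begin{equation*}
  (i\nabla+A)(\chi_R f) = \chi_R(i\nabla+A)f + i(\nabla\chi_R)f,
\end{equation*}
and since $\|\nabla\chi_R\|_\infty = O(R^{-1})$, $|\chi_R|\leq 1$, and $\chi_R\to 1$ pointwise, dominated convergence yields $\chi_R f\to f$ in $\mathcal{H}_{A,Q}$ as $R\to\infty$. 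Hence it suffices to approximate compactly supported $f\in\mathcal{H}_{A,Q}$ by $C_0^\infty$ functions.

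For the second step, I convolve with a standard mollifier $\eta_\varepsilon$ and set $f_\varepsilon := f*\eta_\varepsilon$; for sufficiently small $\varepsilon$ the support of $f_\varepsilon$ stays in a fixed compact set $K$. Convergence $f_\varepsilon\to f$ in $L^2$ is classical. For the $Q_1^{1/2}$-term, using that $Q_1^{1/2}f\in L^2(K)$ and $Q_1^{1/2}\in L^2(K)$ (because $Q_1\in L^1_{\textup{loc}}$), one shows by a dominated convergence argument on $K$ that $Q_1^{1/2}f_\varepsilon\to Q_1^{1/2}f$ in $L^2$. The main obstacle, and the heart of the Leinfelder--Simader proof, is controlling $(i\nabla+A)f_\varepsilon$; the key identity
\begin{equation*}
  (i\nabla+A)f_\varepsilon = \bigl[(i\nabla+A)f\bigr]*\eta_\varepsilon + R_\varepsilon f, \qquad R_\varepsilon f := A(f*\eta_\varepsilon) - (Af)*\eta_\varepsilon,
\end{equation*}
reduces the problem to a Friedrichs-type commutator estimate: one must show that the commutator $R_\varepsilon f\to 0$ in $L^2(K)$ when $A\in L^2_{\textup{loc}}$ and $f$ is compactly supported. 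This is the standard technical step I expect to be the main obstacle; it is handled by writing the commutator as $\int (A(x)-A(y))f(y)\eta_\varepsilon(x-y)\,\textup{d}y$, using local $L^2$-integrability of $A$ together with $L^1$-density of translations to pass to the limit. Once this is in hand, the first summand converges to $(i\nabla+A)f$ in $L^2$ by classical mollifier properties, and combining the three convergences yields $f_\varepsilon\to f$ in $\mathcal{H}_{A,Q}$, completing the density argument.
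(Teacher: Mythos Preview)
The paper does not give its own proof of this lemma; it simply cites \cite[Lemma~1 and Theorem~1]{LS81}. Your sketch follows precisely the Leinfelder--Simader strategy (completeness by identifying distributional limits, then spatial cutoff followed by mollification, with the Friedrichs commutator $R_\varepsilon f = A(f*\eta_\varepsilon) - (Af)*\eta_\varepsilon$ as the crux of the magnetic part), so in spirit you are reproducing exactly the argument the paper defers to.

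There is, however, one genuine gap in your mollification step, namely the convergence $Q_1^{1/2}f_\varepsilon\to Q_1^{1/2}f$ in $L^2(K)$. You write that this follows ``by a dominated convergence argument on $K$'' using $Q_1^{1/2}f\in L^2(K)$ and $Q_1^{1/2}\in L^2(K)$, but neither of these furnishes a dominating function for $Q_1^{1/2}f_\varepsilon$: the mollifications $f_\varepsilon$ are \emph{not} uniformly bounded in $L^\infty$ when $f$ is merely a compactly supported element of $\mathcal{H}_{A,Q}$ (recall that in dimension $n\geq 2$ even $H^1$-functions, and hence by the diamagnetic inequality $|f|$, need not be bounded). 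Without a uniform bound on $|f_\varepsilon|$ there is no obvious majorant for $Q_1|f_\varepsilon|^2$, and one can check that uniform integrability of $\{Q_1|f_\varepsilon|^2\}_\varepsilon$ is exactly what is missing.

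The standard fix, which is what Leinfelder--Simader in effect do, is to insert an additional \emph{value truncation} between your spatial cutoff and the mollification: replace the compactly supported $f$ by $T_M f$, where $T_M(z)=z$ for $|z|\leq M$ and $T_M(z)=Mz/|z|$ otherwise, and show $T_M f\to f$ in $\mathcal{H}_{A,Q}$ as $M\to\infty$. Once $f$ is bounded and compactly supported, $\|f_\varepsilon\|_\infty\leq\|f\|_\infty$ and your dominated convergence argument for the $Q_1$-term becomes valid with majorant $\|f\|_\infty^2\, Q_1\in L^1(K)$. The verification that $T_M f\to f$ in $\mathcal{H}_{A,Q}$ (in particular that $(i\nabla+A)(T_M f)\to(i\nabla+A)f$ in $L^2$) is itself a nontrivial chain-rule/Kato-inequality type step, and is part of the content of \cite[Theorem~1]{LS81}.
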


Next, we state a variant of the diamagnetic inequality that will be useful in our considerations. 

\begin{lem} \label{lemma_diamagnetic_inequality}
  For all $f \in \mathcal{H}_{A,Q}$ one has $\nabla |f| \in L^2(\mathbb{R}^n; \mathbb{C}^n)$ and the estimate 
  \begin{equation} \label{diamagnetic_inequality}
    \big\| \nabla |f| \big\|_{L^2(\mathbb{R}^n)} \leq \| f \|_{\mathcal{H}_{A,Q}}
  \end{equation}
  is true.
\end{lem}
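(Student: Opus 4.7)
The plan is to prove the classical (Kato) diamagnetic inequality by a regularization argument: approximate $|f|$ by the smooth square root $|f|_\varepsilon := \sqrt{|f|^2 + \varepsilon^2}$ (or better $u_\varepsilon := |f|_\varepsilon - \varepsilon$, so that $u_\varepsilon \in L^2(\mathbb{R}^n)$), establish a pointwise inequality for $|\nabla u_\varepsilon|$, and then pass to the limit $\varepsilon \to 0^+$.

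First I would observe that the distributional gradient $\nabla f$ already belongs to $L^1_{\textup{loc}}(\mathbb{R}^n;\mathbb{C}^n)$: indeed, $A \in L^2_{\textup{loc}}$ and $f \in L^2$ give $A f \in L^1_{\textup{loc}}$ by Cauchy--Schwarz, while $(i\nabla + A)f \in L^2(\mathbb{R}^n;\mathbb{C}^n)$ by the definition of $\mathcal{H}_{A,Q}$; hence $\nabla f = -i(i\nabla + A)f + i A f \in L^1_{\textup{loc}}$. The next step is the chain rule
\begin{equation*}
  \nabla u_\varepsilon \;=\; \frac{\textup{Re}\,\bigl(\overline{f}\,\nabla f\bigr)}{\sqrt{|f|^2+\varepsilon^2}} \qquad \text{in } \mathcal{D}'(\mathbb{R}^n).
\end{equation*}
Using the reality of $A$,
\begin{equation*}
  \overline{f}\,\nabla f \;=\; -i\,\overline{f}\,(i\nabla + A)f + i A |f|^2,
\end{equation*}
so that $\textup{Re}(\overline f\,\nabla f) = \textup{Im}(\overline f\,(i\nabla + A)f)$, and therefore pointwise a.e.
\begin{equation*}
  |\nabla u_\varepsilon| \;\leq\; \frac{|f|\,|(i\nabla+A)f|}{\sqrt{|f|^2+\varepsilon^2}} \;\leq\; |(i\nabla+A)f|.
\end{equation*}
Squaring and integrating, $\|\nabla u_\varepsilon\|_{L^2} \leq \|(i\nabla+A)f\|_{L^2} \leq \|f\|_{\mathcal{H}_{A,Q}}$ uniformly in $\varepsilon$.

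Finally I pass to the limit. Since $0 \leq u_\varepsilon \leq |f|$ and $u_\varepsilon \to |f|$ pointwise, dominated convergence yields $u_\varepsilon \to |f|$ in $L^2$, hence $\nabla u_\varepsilon \to \nabla |f|$ in $\mathcal{D}'(\mathbb{R}^n)$. The uniform $L^2$-bound gives, via the Banach--Alaoglu theorem, a weak limit point $g \in L^2(\mathbb{R}^n;\mathbb{C}^n)$ with $\|g\|_{L^2} \leq \|f\|_{\mathcal{H}_{A,Q}}$; uniqueness of distributional limits forces $g = \nabla |f|$, and the norm bound is then the desired \eqref{diamagnetic_inequality}.

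The main technical obstacle is rigorously justifying the chain-rule identity for $u_\varepsilon$ when $f$ is complex-valued and $\nabla f$ is only $L^1_{\textup{loc}}$. I would handle this by a standard mollification: set $f_\delta := f * \eta_\delta \in C^\infty$, apply the classical (smooth) chain rule to obtain $\nabla u_{\varepsilon,\delta} = \textup{Re}(\overline{f_\delta}\,\nabla f_\delta)/\sqrt{|f_\delta|^2 + \varepsilon^2}$ together with the pointwise bound written in terms of $(i\nabla+A)f_\delta$, and then let $\delta \to 0$ before $\varepsilon \to 0$. The only delicate point is that $A$ is merely $L^2_{\textup{loc}}$, so $(i\nabla+A)f_\delta$ need not converge in $L^2$ globally; however, since $f_\delta \to f$ in $L^2$, $\nabla f_\delta \to \nabla f$ in $L^1_{\textup{loc}}$, and $A f_\delta \to A f$ in $L^1_{\textup{loc}}$, convergence holds in $L^1_{\textup{loc}}$, which is enough to pass to the limit in the pointwise inequality by extracting an a.e.-convergent subsequence and applying Fatou's lemma. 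Alternatively, one may simply invoke the complex-valued Stampacchia chain rule for $W^{1,1}_{\textup{loc}}$ functions applied to the Lipschitz map $z \mapsto \sqrt{|z|^2+\varepsilon^2}-\varepsilon$, which delivers the identity directly.
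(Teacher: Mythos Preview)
Your argument is correct and self-contained, but it is considerably more elaborate than what the paper does. The paper simply invokes the pointwise diamagnetic inequality $|\nabla|f(x)|| \leq |(i\nabla+A(x))f(x)|$ from \cite[Theorem~7.21]{LL01}, and then observes that integrating and using $Q_1 \geq 0$ gives the claim in two lines. What you have written is essentially a from-scratch proof of that very pointwise inequality via the standard Kato regularization $|f|_\varepsilon = \sqrt{|f|^2+\varepsilon^2}$ and a weak-limit argument, i.e.\ you are reproving the cited result rather than citing it. The upside of your approach is that it is self-contained and makes explicit why only $A \in L^2_{\textup{loc}}$ is needed (so that $\nabla f \in L^1_{\textup{loc}}$ and the chain rule applies); the upside of the paper's approach is brevity and the avoidance of the mollification/chain-rule technicalities you flag at the end. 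Substantively the two routes coincide once the pointwise bound is in hand.
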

\begin{proof}
  By \cite[Theorem~7.21]{LL01} for almost all $x \in \mathbb{R}^n$ the pointwise estimate 
  \begin{equation*}
    \big| \nabla |f(x)| \big| \leq \big| (i \nabla + A(x)) f(x) \big|
  \end{equation*}
  is true. Therefore, for $f \in \mathcal{H}_{A,Q}$ one has $\nabla |f| \in L^2(\mathbb{R}^n; \mathbb{C}^n)$ and as $Q_1 \geq 0$ one concludes
  \begin{equation*}
    \begin{split}
      \int_{\mathbb{R}^n} \big| \nabla |f| \big|^2 \textup{d} x &\leq \int_{\mathbb{R}^n} \big| (i \nabla + A) f \big|^2 \textup{d} x \\
      &\leq \int_{\mathbb{R}^n} \bigl(|(i \nabla + A) f|^2 + (Q_1+1) |f|^2 \bigr) \textup{d} x = \| f \|_{\mathcal{H}_{A,Q}}^2,
    \end{split}
  \end{equation*}
  which is the claimed result.
\end{proof}

In the following lemma the non-real term $Q_2$ is added and it is shown, that the norm that is induced by the associated quadratic form is equivalent to the norm in~\eqref{def_norm_H_A_Q}.

\begin{lem} \label{lemma_Q_2}
   Let $Q_2 \in L^{p'}(\mathbb{R}^n)+L^\infty(\mathbb{R}^n)$, where $p' \geq \frac{n}{2}$ for $n>2$ and $p' > 1$ for $n=2$. Then, the form 
   \begin{equation} \label{unperturbed_form}
     \mathfrak{h}_{A,Q}[f] := \int_{\mathbb{R}^n} \bigl(|(i \nabla + A) f|^2 + (Q_1+Q_2) |f|^2 \bigr) \textup{d} x, \quad \dom \mathfrak{h}_{A,Q} = \mathcal{H}_{A,Q},
   \end{equation}
   is well-defined, sectorial, and closed, and there exist constants $C_2 > C_1 > 0$ and $C_3 \in \mathbb{R}$ such that
   \begin{equation*}
     C_1 \| f \|_{\mathcal{H}_{A,Q}}^2 \leq \bigl(\textup{Re}\, \mathfrak{h}_{A,Q} + C_3 \bigr)[f] \leq C_2 \| f \|_{\mathcal{H}_{A,Q}}^2, \quad f \in \mathcal{H}_{A,Q}.
   \end{equation*}
\end{lem}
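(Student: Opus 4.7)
The plan is to write $\mathfrak{h}_{A,Q} = \mathfrak{h}_0 + \mathfrak{q}_2$, where
\begin{equation*}
  \mathfrak{h}_0[f] := \int_{\mathbb{R}^n} \bigl(|(i\nabla + A)f|^2 + Q_1 |f|^2\bigr)\,\textup{d} x, \qquad \mathfrak{q}_2[f] := \int_{\mathbb{R}^n} Q_2 |f|^2\,\textup{d} x,
\end{equation*}
both defined on $\mathcal{H}_{A,Q}$. By Lemma~\ref{lemma_H_A_Q_basic}, $\mathfrak{h}_0$ is closed, real and nonnegative, with form norm $(\mathfrak{h}_0[f]+\|f\|^2_{L^2})^{1/2} = \|f\|_{\mathcal{H}_{A,Q}}$. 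The key is to show that $\mathfrak{q}_2$ is well-defined on $\mathcal{H}_{A,Q}$ and $\mathfrak{h}_0$-bounded with relative bound strictly less than $1$; the other conclusions will then follow from standard sectorial perturbation theory \cite[Theorem~VI.1.33]{K95}.

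First I would combine the diamagnetic inequality of Lemma~\ref{lemma_diamagnetic_inequality} (which, since $Q_1 \geq 0$, even gives $\|\nabla|f|\|^2_{L^2} \leq \mathfrak{h}_0[f]$) with the Sobolev embedding $H^1(\mathbb{R}^n) \hookrightarrow L^{2p}(\mathbb{R}^n)$, where $p = p'/(p'-1)$ is the Hölder conjugate of the exponent in~\eqref{assumption_Q}. The conditions on $p'$ in~\eqref{assumption_Q} are precisely equivalent to $2p \leq 2n/(n-2)$ when $n>2$ and $2p < \infty$ when $n=2$, so this embedding is admissible and yields
\begin{equation*}
  \|f\|^2_{L^{2p}} = \bigl\||f|^2\bigr\|_{L^p} \leq C_{\mathrm{Sob}}\bigl(\mathfrak{h}_0[f] + \|f\|^2_{L^2}\bigr).
\end{equation*}

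Given $\eta \in (0,1)$, I would then split $Q_2 = U_\eta + W_\eta$ with $U_\eta \in L^{p'}(\mathbb{R}^n)$, $W_\eta \in L^\infty(\mathbb{R}^n)$, and $\|U_\eta\|_{L^{p'}} \leq \eta/C_{\mathrm{Sob}}$. In the subcritical regime ($p' > n/2$ for $n>2$, or $p' > 1$ for $n=2$), $2p$ lies strictly below the Sobolev exponent, and one even gets $\mathfrak{q}_2$ infinitesimally $\mathfrak{h}_0$-bounded via Gagliardo--Nirenberg and Young's inequality, so the splitting is unnecessary. In the critical case $p' = n/2$, $n>2$, starting from any decomposition $Q_2 = U_0 + W_0$ furnished by~\eqref{assumption_Q}, I would truncate $U_\eta := U_0\,\mathbf{1}_{\{|U_0|>N\}}$ for $N$ sufficiently large, moving the bounded remainder $U_0\,\mathbf{1}_{\{|U_0|\leq N\}}$ into $W_\eta$; since $|U_0|^{n/2} \in L^1(\mathbb{R}^n)$, dominated convergence forces $\|U_\eta\|_{L^{n/2}} \to 0$. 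Hölder's inequality then yields
\begin{equation*}
  |\mathfrak{q}_2[f]| \leq \|U_\eta\|_{L^{p'}} \|f\|^2_{L^{2p}} + \|W_\eta\|_{L^\infty} \|f\|^2_{L^2} \leq \eta\, \mathfrak{h}_0[f] + M_\eta \|f\|^2_{L^2},
\end{equation*}
simultaneously verifying well-definedness of $\mathfrak{q}_2$ on $\mathcal{H}_{A,Q}$ and the desired relative form bound.

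Applying this with, say, $\eta = 1/2$, the sectorial perturbation theorem \cite[Theorem~VI.1.33]{K95} ensures that $\mathfrak{h}_{A,Q}$ is closed and sectorial on $\mathcal{H}_{A,Q}$, and the two-sided bound for $\textup{Re}\,\mathfrak{h}_{A,Q}$ is immediate: the relative bound implies
\begin{equation*}
  \tfrac{1}{2}\mathfrak{h}_0[f] - M_{1/2}\|f\|^2_{L^2} \leq \textup{Re}\,\mathfrak{h}_{A,Q}[f] \leq \tfrac{3}{2}\mathfrak{h}_0[f] + M_{1/2}\|f\|^2_{L^2},
\end{equation*}
so setting $C_3 := M_{1/2} + 1/2$ converts this into the required equivalence with $\|f\|^2_{\mathcal{H}_{A,Q}} = \mathfrak{h}_0[f] + \|f\|^2_{L^2}$, with, e.g., $C_1 = 1/2$ and $C_2 = \max(3/2, 2M_{1/2} + 1/2)$. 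The main obstacle I foresee is the critical Sobolev case $p' = n/2$, $n > 2$: generic $L^{n/2}$ potentials are \emph{not} infinitesimally form bounded with respect to the magnetic Laplacian, so the truncation argument is essential to still push the relative bound below $1$.
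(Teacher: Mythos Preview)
Your proof is correct and follows essentially the same route as the paper: show that $\mathfrak{q}_2$ is relatively form bounded with respect to $\mathfrak{h}_0$ with bound $<1$ by combining the diamagnetic inequality with an $H^1$-estimate, then invoke \cite[Theorem~VI.1.33]{K95}. The paper obtains the $H^1$-estimate by citing \cite[Lemma~3.2]{ABHS25} as a black box, whereas you spell it out via Sobolev embedding, H\"older, and the truncation $U_\eta = U_0\mathbf{1}_{\{|U_0|>N\}}$---which is exactly the standard proof of that lemma. One small correction to your closing remark: your truncation argument in fact shows that $Q_2 \in L^{n/2}$ \emph{is} infinitesimally form bounded (any $\eta>0$ is achievable), so the ``obstacle'' you foresee is already dissolved by the very argument you gave.
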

\begin{proof}
  Consider the quadratic form 
  \begin{equation} \label{def_q_2_form}
    \mathfrak{q}_2[f] := \int_{\mathbb{R}^n} Q_2 |f|^2 \textup{d} x, \quad \dom \mathfrak{q}_2 = \bigl\{ f \in L^2(\mathbb{R}^n): |Q_2|^{1/2} f \in L^2(\mathbb{R}^n) \bigr\}.
  \end{equation}
  To show the claims, it suffices to prove that $\mathfrak{q}_2$ is form bounded with respect to the form $\mathcal{H}_{A,Q} \ni f \mapsto \| f \|_{\mathcal{H}_{A,Q}}^2$ with form bound smaller than one. For this, we claim that the multiplication by $|Q_2|^{1/2}$ gives rise to a bounded operator from  $H^1(\mathbb{R}^n)$ to $L^2(\mathbb{R}^n)$ and that for all $\delta > 0$ there exists $C_\delta > 0$ such that
  \begin{equation} \label{multiplication_op_H1}
    \big\| |Q_2|^{1/2} f \big\|_{L^2(\mathbb{R}^n)}^2 \leq \delta \| f \|_{H^1(\mathbb{R}^n)}^2 + C_\delta \| f \|_{L^2(\mathbb{R}^n)}^2, \quad f \in H^1(\mathbb{R}^n).
  \end{equation}
  To see this, note that by assumption there exist $Q_{2,p'} \in L^{p'}(\mathbb{R}^n)$ and $Q_{2,\infty} \in L^\infty(\mathbb{R}^n)$ such that $Q_2 = Q_{2,p'} + Q_{2,\infty}$. 
  It follows from \cite[Lemma~3.2]{ABHS25} (applied for $s=1$) that there exists for all $\delta > 0$ a constant $\widetilde{C}_\delta$ such that for all $ f \in H^1(\mathbb{R}^n)$
  \begin{equation*} 
    \big\| |Q_{2,p'}|^{1/2} f \big\|_{L^2(\mathbb{R}^n)}^2 \leq \delta \| f \|_{H^1(\mathbb{R}^n)}^2 + \widetilde{C}_\delta \| f \|_{L^2(\mathbb{R}^n)}^2.
  \end{equation*}
  Therefore,
  \begin{equation*} 
    \begin{split}
    \big\| |Q_{2}|^{1/2} f \big\|_{L^2(\mathbb{R}^n)}^2 
    &= \int_{\mathbb{R}^n} |Q_2||f|^2 \textup{d} x \\
    &\leq \int_{\mathbb{R}^n} |Q_{2,p'}||f|^2 \textup{d} x + \int_{\mathbb{R}^n} |Q_{2,\infty}||f|^2 \textup{d} x \\    
    &\leq \delta \| f \|_{H^1(\mathbb{R}^n)}^2 + \bigl(\widetilde{C}_\delta + \| Q_{2,\infty} \|_{L^\infty(\mathbb{R}^n)} \bigr) \| f \|_{L^2(\mathbb{R}^n)}^2,
    \end{split}
  \end{equation*}
  which is exactly~\eqref{multiplication_op_H1} with $C_\delta := \widetilde{C}_\delta + \|Q_{2,\infty} \|_{L^\infty(\mathbb{R}^n)}$.

  Using~\eqref{multiplication_op_H1} for a fixed $\delta \in (0,1)$ and Lemma~\ref{lemma_diamagnetic_inequality} imply first for $f \in C_0^\infty(\mathbb{R}^n)$
  \begin{equation*}
    \begin{split}
      \bigl| \mathfrak{q}_2[f] \bigr| &\leq \big\| |Q_2|^{1/2} |f| \big\|_{L^2(\mathbb{R}^n)}^2 \\
      &\leq \delta \big\| \nabla |f| \big\|_{L^2(\mathbb{R}^n)}^2 + (C_\delta + \delta + 1) \big\| |f|  \big\|_{L^2(\mathbb{R}^n)}^2 \\
      &\leq \delta \| f \|_{\mathcal{H}_{A, Q}}^2 + (C_\delta + \delta + 1) \big\| f  \big\|_{L^2(\mathbb{R}^n)}^2.
    \end{split}
  \end{equation*}
  Since $C_0^\infty(\mathbb{R}^n)$ is dense in $\mathcal{H}_{A, Q}$ by Lemma~\ref{lemma_H_A_Q_basic}, the latter estimate remains true for all $f \in \mathcal{H}_{A, Q}$. This finishes the proof.
\end{proof}

\begin{remark} \label{remark_form_bound}
  The only property of the term $Q_2$ that is needed in this paper and, in particular, to prove Theorem~\ref{theorem_approximation}, is the one from Lemma~\ref{lemma_Q_2}. As seen in the proof, the statement remains true, if $Q_2$ is such that the form $\mathfrak{q}_2$ in~\eqref{def_q_2_form} is form bounded with respect to the form $\mathcal{H}_{A,Q} \ni f \mapsto \| f \|_{\mathcal{H}_{A,Q}}^2$ with form bound smaller than one. Hence, Theorem~\ref{theorem_approximation} remains true for all $Q_2$ which have the latter property.
\end{remark}

\subsection{Description of hypersurfaces and their tubular neighborhoods} \label{section_hypersurfaces}

In this section the hypersurfaces that build the network being the support of the $\delta$-potential are described, and some properties of the map $\iota$ in~\eqref{def_iota_k_intro} which describes the tubular neighborhood of the network are collected. Finally, a suitable variant of the trace theorem for these hypersurfaces and the space $\mathcal{H}_{A, Q}$ from~\eqref{def_H_A_Q} is discussed. Since for the proof of Theorem~\ref{theorem_approximation} similar ideas as in \cite[Section~3.2]{BHSL25} are used, we follow closely the definition of hypersurfaces from \cite{BHSL25}:

\begin{definition} \label{definition_hypersurface}
  We say that $\Sigma \subset \mathbb{R}^n$ is an admissible $C^2$-hypersurface, if there exist a number $p \in \mathbb{N}$, open sets $\Omega, V_1, \dots, V_p, W_1, \dots, W_p \subset \mathbb{R}^n$, real-valued mappings $\zeta_1, \dots, \zeta_p \in C^2_b(\mathbb{R}^{n-1})$, rotation matrices $\kappa_1, \dots, \kappa_p \in \mathbb{R}^{n \times n}$, and $\varepsilon_1 > 0$ such that the following holds:
  \begin{itemize}
    \item[(i)] $\partial \Omega \subset \bigcup_{k=1}^p W_k$.
    \item[(ii)] For all $x \in \partial \Omega$ there exists $k=k(x) \in \{ 1, \dots, p \}$ such that $B(x,\varepsilon_1) \subset W_k$.
    \item[(iii)] $W_k \cap \Omega = W_k \cap \Omega_k$ for all $k \in \{ 1, \dots, p \}$, where the notation $\Omega_k = \{ \kappa_k (x', x_n): x' \in \mathbb{R}^{n-1}, x_n < \zeta_k(x') \}$ is used.
    \item[(iv)] $\Sigma \subset \partial \Omega \cap \big( \bigcup_{k=1}^N V_k \big)$.
    \item[(v)] $V_k \subset W_k$ and $\partial \Omega \cap V_k = \Sigma \cap V_k$ for all $k \in \{ 1, \dots, p \}$.
  \end{itemize}
\end{definition}

Note that the sets $\Omega_1, \dots, \Omega_p$ appearing in item~(iii) of the previous definition are rotated Lipschitz hypographs as in \cite[Chapter~3]{M00}, where the function $\zeta_k$ belongs to $C^2_b(\mathbb{R}^{n-1})$. 
Roughly speaking, conditions (i)--(iii) in the above definition mean that the set $\Omega$ is of the same form as the sets $\Omega_+$ considered in \cite{BHSL25}, while conditions (iv)\&(v) mean that the admissible hypersurface $\Sigma$ is a subset of $\partial \Omega$ with good properties, so that an integral over $\Sigma$ is well-defined. In particular, (relatively open subsets of) $C^2_b$-hypographs and the boundaries of compact $C^2$-domains are admissible $C^2$-hypersurfaces. It will be important for our purposes that item~(ii) in the previous definition implies in the case of unbounded hypersurfaces that different ends can not be too close to each other. A typical example of an unbounded curve in $\mathbb{R}^2$ which violates this condition is stated in the following example:

\begin{example}
  Let $\Omega \subset \mathbb{R}^2$ be an unbounded domain with boundary $\Sigma = \partial \Omega$ which can be described as in items~(i) \& (iii) of Definition~\ref{definition_hypersurface} and which has the two ends $\pm (\sin^2(x) + x^{-1})$ for $x$ sufficiently large. Note that the two ends must be the boundaries of two different sets $\Omega_k$ as in Definition~\ref{definition_hypersurface}~(iii), as they can not be the image of one function $\mathbb{R} \ni t \mapsto \kappa (t, \zeta(t))$ with a rotation matrix $\kappa$ and $\zeta \in C^2_b(\mathbb{R})$; without loss of generality, the two ends are parts of the boundaries of 
  \begin{equation*}
    \Omega_1 := \big\{ (x, x_2): x \in \mathbb{R}, x_2 < \sin^2(x) + x^{-1} \big\}
  \end{equation*}
  and 
  \begin{equation*}
    \Omega_2 := \big\{ ( -x, -x_2 ): x \in \mathbb{R}, x_2 < \sin^2(x) - x^{-1} \big\}.
  \end{equation*}
  Set $x_n = (n \pi, (n \pi)^{-1})$. Note that on the one hand item~(iii) in Definition~\ref{definition_hypersurface} implies for the associated sets $W_1, W_2$ that $W_k \cap \Omega_k \subset \Omega$, $k \in \{ 1,2 \}$.  On the other hand, $x_n \in \partial \Omega_1$ for $n$ sufficiently large and for a fixed $\varepsilon_1 > 0$ one has $B(x_n, \varepsilon_1) \cap \Omega_1 \not\subset \Omega$, implying $B(x_n, \varepsilon_1) \not\subset W_1$; in the same way, one argues that $B(x_n, \varepsilon_1) \not\subset W_2$. Therefore, this curve $\Sigma$ does not satisfy assertion~(ii) in Definition~\ref{definition_hypersurface}.
\end{example}

Sobolev spaces $H^s(\Sigma)$ of order $s \in [-2, 2]$ on admissible a $C^2$-hypersurface $\Sigma$ are constructed in the standard way, with the help of a suitable partition of unity subordinate to the open cover $V_1, \dots, V_p$ of $\Sigma$ and local coordinates, see \cite{BHSL25, M00} for details. 

An important role will be played by the Weingarten map or shape operator $W$ on an admissible $C^2$-hypersurface $\Sigma$. This is the linear map at the tangent space $T_{x_\Sigma}$ for $x_{\Sigma} = \kappa_k (x', \zeta_k(x')) \in \Sigma$ via its action on a basis element $\partial_{x'_j} \kappa_k (x', \zeta_k(x'))$ of $T_{x_\Sigma}$,
\begin{equation} \label{def_Weingarten_map}
  W(x_{\Sigma}) \partial_{x'_j} \kappa_k (x', \zeta_k(x')) = - \partial_{x'_j} \nu\bigl( \kappa_k (x', \zeta_k(x')) \bigr), \quad j \in \{ 1, \dots, n-1\},
\end{equation}
where $\nu$ denotes the normal vector field at $\Sigma$. It is not difficult to show that the definition of $W(x_{\Sigma})$ is independent of the parametrization of $\Sigma$. Some useful properties of $W$ and the map 
\begin{equation} \label{def_iota}
  \iota: \Sigma \times \mathbb{R} \rightarrow \mathbb{R}^n, \quad \iota(x_\Sigma, t) := x_\Sigma + t \nu(x_\Sigma),
\end{equation}
see also~\eqref{def_iota_k_intro}, are summarized in the following proposition.

\begin{prop}\label{proposition_tubular_neighbourhood} 
  Let $\Sigma \subset \mathbb{R}^n$ be an admissible $C^2$-hypersurface with unit normal vector field $\nu$, $W$ be the associated Weingarten map, and $\iota$ be defined by~\eqref{def_iota}. Then, there exists $\beta > 0$ such that the following holds:
  \begin{itemize}
    \item[$\textup{(i)}$] $\iota |_{\Sigma \times (-\beta, \beta)}$ is injective.
    \item[$\textup{(ii)}$] There exists $C>0$ such that $|1-\det(I-\varepsilon W(x_{\Sigma}))| \leq C \varepsilon <1/2$ for all $x_{\Sigma} \in \Sigma$ and $\varepsilon \in (-\beta, \beta)$.
    \item[$\textup{(iii)}$] For $\varepsilon \in (-\beta, \beta)$ one has with $\Omega_\varepsilon := \iota (\Sigma \times (-\varepsilon, \varepsilon))$ that $f \in L^1(\Omega_\varepsilon)$ if and only if  $f \circ \iota |_{\Sigma \times (-\varepsilon,\varepsilon)} \in L^1(\Sigma\times(-\varepsilon,\varepsilon))$  and in this case
    \begin{equation*}
      \int_{\Omega_{\varepsilon}} f(x)  \textup{d} x = \int_{-\varepsilon}^{\varepsilon} \int_\Sigma f(\iota(x_\Sigma, t))  \det(I- tW(x_\Sigma))\textup{d}\sigma(x_{\Sigma}) \textup{d} t.
    \end{equation*}
    \item[$\textup{(iv)}$] Let  $p \geq 1$ and $\varepsilon \in (-\beta, \beta)$. Then, $f \in L^p(\Omega_\varepsilon)$ if and only if $f \circ \iota|_{\Sigma \times (-\varepsilon,\varepsilon)} \in L^p(\Sigma\times(-\varepsilon,\varepsilon))$ and there exist constants $0 < C_1 < C_2$ such that
    \begin{equation*}
      C_1 \bigl\| f \circ \iota|_{\Sigma \times (-\varepsilon,\varepsilon)} \bigr\|_{L^p(\Sigma\times(-\varepsilon,\varepsilon))} \leq \| f \|_{L^p(\Omega_\varepsilon)} \leq C_2 \bigl\| f \circ \iota|_{\Sigma \times (-\varepsilon,\varepsilon)} \bigr\|_{L^p(\Sigma\times(-\varepsilon,\varepsilon))}.
    \end{equation*}
  \end{itemize}
\end{prop}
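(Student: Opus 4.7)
The strategy is to leverage the uniform $C^2_b$-bounds on the parametrizations $\zeta_k$ and the finiteness of the cover $\{W_k\}$ from Definition~\ref{definition_hypersurface} to obtain uniform estimates across all of $\Sigma$. I would begin with part (ii): in a chart parametrized by $y \mapsto \kappa_k(y,\zeta_k(y))$, the unit normal field is a rational expression in $\nabla \zeta_k$, and, using the defining relation~\eqref{def_Weingarten_map}, the matrix of $W$ in the basis $\{\partial_j \kappa_k(y,\zeta_k(y))\}$ is a rational expression in the first and second derivatives of $\zeta_k$. Because $\zeta_k \in C^2_b$, these quantities are uniformly bounded, so $M := \sup_{x_\Sigma \in \Sigma}\|W(x_\Sigma)\| < \infty$. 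Expanding the determinant yields $|1 - \det(I - \varepsilon W(x_\Sigma))| \leq C|\varepsilon|$ uniformly in $x_\Sigma$, and shrinking $\beta$ makes the right-hand side smaller than $1/2$.

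For part (i), I would first prove local injectivity. Parametrizing $\iota$ near a point of $\Sigma$ by $(y,t) \mapsto \kappa_k(y,\zeta_k(y)) + t \nu(\kappa_k(y,\zeta_k(y)))$ and differentiating, the tangential block of the differential is $I - tW(x_\Sigma)$ and the normal component is $\nu$, orthogonal to the tangent space. At $t=0$ this is invertible with a bound on the inverse depending only on the $C^2_b$-norms of $\zeta_k$; a quantitative version of the inverse function theorem thus gives a uniform radius $r>0$ such that $\iota$ is injective on every set of the form $(\Sigma \cap B(x_\Sigma, r)) \times (-r,r)$. For global injectivity, suppose $\iota(x_\Sigma,t) = \iota(y_\Sigma,s)$ with $|t|,|s|<\beta$. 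Then $|x_\Sigma - y_\Sigma| \leq |t|+|s| < 2\beta$, so choosing $\beta < \min(r/4, \varepsilon_1/4)$ and invoking item~(ii) of Definition~\ref{definition_hypersurface} places both $x_\Sigma$ and $y_\Sigma$ inside the same chart $W_k$, where local injectivity applies and forces $x_\Sigma = y_\Sigma$ and then $t=s$.

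For parts (iii) and (iv), the same local calculation shows that the Jacobian determinant of $\iota$ at $(x_\Sigma, t)$ equals $\det(I - tW(x_\Sigma))$, which by (ii) lies in $[1/2, 3/2]$ for $|t|<\beta$. Part (iii) then follows from the standard change-of-variables formula applied in each chart and patched together by a partition of unity subordinate to $\{V_k\}$, first for nonnegative measurable $f$ via monotone convergence and then for general $f \in L^1(\Omega_\varepsilon)$ by splitting into real and imaginary, positive and negative parts. Part (iv) is an immediate consequence of (iii) applied to $|f|^p$, combined with the two-sided pointwise bound $1/2 \leq |\det(I - tW(x_\Sigma))| \leq 3/2$, which translates into the asserted equivalence of norms.

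The main obstacle is establishing global injectivity with a $\beta$ that is uniform in $x_\Sigma \in \Sigma$; this is where both the $C^2_b$-assumption on the graph functions $\zeta_k$ and the uniform ball condition in (ii) of Definition~\ref{definition_hypersurface} are essential, since $\Sigma$ may be unbounded and no compactness argument is available. Once these ingredients are combined to yield (i) and (ii), the integration-theoretic statements (iii) and (iv) are routine consequences of the standard change-of-variables theorem.
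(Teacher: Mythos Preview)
Your proposal is correct and matches the paper's approach: the paper simply cites \cite[Proposition~2.4]{BHSL25} for items (i)--(iii) and states that (iv) is an immediate consequence of the formula in (iii) together with the two-sided bound in (ii), which is exactly your argument for (iv). Your sketch for (i)--(iii) is the standard tubular-neighborhood argument (uniform bound on $W$ from the $C^2_b$ assumption, quantitative inverse function theorem for local injectivity, the uniform-ball condition of Definition~\ref{definition_hypersurface}(ii) to pass from local to global injectivity, and the change-of-variables formula) and is presumably what underlies the cited reference.
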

\begin{proof}
  Items (i)--(iii) can be shown in the same way as in  \cite[Proposition~2.4]{BHSL25}. The claim in (iv) is an immediate consequence of the formula in (iii) and (ii).
\end{proof}

In the next proposition a variant of the trace theorem involving the space $\mathcal{H}_{A,Q}$ defined by~\eqref{def_H_A_Q}, which is a key ingredient to define the for $\mathfrak{h}_{A,Q, \alpha}$ in~\eqref{def_H_alpha_form_intro}, is stated. 

\begin{prop} \label{proposition_trace_theorem}
  Let $\Sigma \subset \mathbb{R}^n$ be an admissible $C^2$-hypersurface and $q = 2 \frac{n-1}{n-2+\mu}$ for $\mu \in (0,1]$. Then, the map $C_0^\infty(\mathbb{R}^n) \ni f \mapsto f|_\Sigma$ admits a unique continuous extension $\gamma_D: \mathcal{H}_{A,Q} \rightarrow L^q(\Sigma)$ and for all $\delta > 0$ there exists $C_\delta > 1$ such that
  \begin{equation} \label{trace_estimate}
    \| \gamma_D f \|_{L^q(\Sigma)}^2 \leq \delta \| f \|_{\mathcal{H}_{A,Q}}^2 + C_\delta \| f \|_{L^2(\mathbb{R}^n)}^2
  \end{equation}
  holds for all $f \in \mathcal{H}_{A,Q}$.
\end{prop}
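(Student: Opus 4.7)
The strategy is to reduce the magnetic setting to standard facts about $H^1(\mathbb{R}^n)$ via the diamagnetic inequality, and then combine a small-parameter $L^2$-trace inequality with the critical Sobolev trace embedding through an interpolation between subcritical Lebesgue exponents. Since $C_0^\infty(\mathbb{R}^n)$ is dense in $\mathcal{H}_{A,Q}$ by Lemma~\ref{lemma_H_A_Q_basic}, it suffices to establish~\eqref{trace_estimate} for $f \in C_0^\infty(\mathbb{R}^n)$; then $\gamma_D$ is defined by continuous extension.

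For $f \in C_0^\infty(\mathbb{R}^n)$, Lemma~\ref{lemma_diamagnetic_inequality} yields $|f| \in H^1(\mathbb{R}^n)$ with $\bigl\| |f| \bigr\|_{H^1(\mathbb{R}^n)}^2 \leq 2 \|f\|_{\mathcal{H}_{A,Q}}^2$; since $|f|_\Sigma| = |f|\bigr|_\Sigma$ pointwise, the whole task is reduced to proving, for an arbitrary $g \in H^1(\mathbb{R}^n)$, a non-magnetic trace estimate of the form $\|g|_\Sigma\|_{L^q(\Sigma)}^2 \leq \delta \|g\|_{H^1(\mathbb{R}^n)}^2 + C_\delta \|g\|_{L^2(\mathbb{R}^n)}^2$. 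Using the finite cover $\{V_k\}$ of $\Sigma$ from Definition~\ref{definition_hypersurface}, the local $C^2$-graph representations, and a subordinate partition of unity, one may work in a single chart where $\Sigma \cap V_k$ is flattened to (a piece of) $\mathbb{R}^{n-1} = \partial \mathbb{R}^n_+$.

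On this model configuration, two classical facts are invoked. \emph{First}, the fundamental theorem of calculus applied to $|g(x',t)|^2$ in the normal direction together with a Cauchy--Schwarz and Young step gives, for any $\delta > 0$,
\begin{equation*}
  \|g|_\Sigma\|_{L^2(\Sigma)}^2 \leq \delta \|g\|_{H^1(\mathbb{R}^n)}^2 + C_\delta \|g\|_{L^2(\mathbb{R}^n)}^2.
\end{equation*}
\emph{Second}, the continuous trace $H^1(\mathbb{R}^n) \to H^{1/2}(\Sigma)$ composed with the Sobolev embedding $H^{1/2}(\Sigma) \hookrightarrow L^{q_0}(\Sigma)$, where $q_0 := 2(n-1)/(n-2)$ for $n \geq 3$ (and any fixed $q_0 \in (q,\infty)$ for $n = 2$), yields $\|g|_\Sigma\|_{L^{q_0}(\Sigma)} \leq C \|g\|_{H^1(\mathbb{R}^n)}$. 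Since $\frac{1}{q} = \frac{\mu}{2} + \frac{1-\mu}{q_0}$ for $\mu \in (0,1]$, Hölder's inequality gives the interpolation
\begin{equation*}
  \|g|_\Sigma\|_{L^q(\Sigma)}^2 \leq \|g|_\Sigma\|_{L^2(\Sigma)}^{2\mu} \,\|g|_\Sigma\|_{L^{q_0}(\Sigma)}^{2(1-\mu)}.
\end{equation*}

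Applying Young's inequality with conjugate exponents $1/\mu$ and $1/(1-\mu)$ produces
\begin{equation*}
  \|g|_\Sigma\|_{L^q(\Sigma)}^2 \leq \varepsilon \|g|_\Sigma\|_{L^{q_0}(\Sigma)}^2 + C_\varepsilon \|g|_\Sigma\|_{L^2(\Sigma)}^2
\end{equation*}
for arbitrarily small $\varepsilon > 0$. Inserting the two model estimates above and choosing $\varepsilon$ small and then $\delta$ small (depending on $\varepsilon$) makes the total coefficient in front of $\|g\|_{H^1(\mathbb{R}^n)}^2$ smaller than any prescribed level, at the price of a large but finite constant in front of $\|g\|_{L^2(\mathbb{R}^n)}^2$. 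Running the diamagnetic reduction backwards finally gives~\eqref{trace_estimate}. The only delicate point is keeping track of the geometric parameters in the first model estimate under the partition of unity and the local changes of variables on a possibly unbounded admissible hypersurface; here Proposition~\ref{proposition_tubular_neighbourhood} guarantees uniform control on a tubular neighbourhood of fixed width $\beta$, so the standard half-space argument goes through. Everything else is routine.
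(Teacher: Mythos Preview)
Your argument is correct, but it is organised differently from the paper's. The paper never localises to a half-space and never splits into an $L^2$ and an $L^{q_0}$ endpoint; instead it works entirely with fractional Sobolev spaces. Concretely, it quotes the boundary Sobolev embedding $H^{1/2-\mu/2}(\Sigma)\hookrightarrow L^q(\Sigma)$, drops to a slightly higher order $H^{1/2-\mu'/2}(\Sigma)$ with $\mu'\in(0,\mu]\cap(0,1)$, applies the trace theorem $H^{1-\mu'/2}(\mathbb{R}^n)\to H^{1/2-\mu'/2}(\Sigma)$, and then invokes the interpolation-type inequality $\| g\|_{H^{1-\mu'/2}(\mathbb{R}^n)}^2\le \delta\|g\|_{H^1(\mathbb{R}^n)}^2+C_\delta\|g\|_{L^2(\mathbb{R}^n)}^2$ in the bulk. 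The diamagnetic reduction (applying everything to $|f|$) is the same as yours. The advantage of the paper's route is brevity and that the geometry of $\Sigma$ is handled once and for all by the cited trace and embedding theorems, so no partition of unity or half-space reduction appears in the proof. The advantage of your route is that it avoids fractional Sobolev spaces altogether: you only use the elementary small-constant $L^2$ trace inequality and the critical $H^1\to L^{q_0}(\Sigma)$ bound, and interpolate at the level of Lebesgue norms on $\Sigma$ via H\"older and Young. Your remark that the uniform tubular neighbourhood from Proposition~\ref{proposition_tubular_neighbourhood} (together with the finitely many $C^2_b$ charts in Definition~\ref{definition_hypersurface}) keeps the constants in the half-space step under control on possibly unbounded $\Sigma$ is the right justification; the endpoint case $\mu=1$ reduces directly to your first model inequality, so the Young step is only needed for $\mu\in(0,1)$.
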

\begin{proof}
  The proof follows closely ideas from \cite[Lemma~A.1]{BLS23}. Let $f \in C_0^\infty(\mathbb{R}^n)$. It follows from \cite[Theorem 8.12.6.I]{B12} that $L^q(\Sigma)$ is continuously embedded in the Sobolev space $H^{1/2-\mu/2}(\Sigma)$, which is again continuously embedded in $H^{1/2-\mu'/2}(\Sigma)$ for $\mu' \in (0, \mu] \cap (0,1)$. Therefore, the trace theorem for Sobolev spaces \cite[Theorem~2]{M87} implies that 
  \begin{equation} \label{trace_theorem_Sobolev}
    \begin{split}
      \| f|_\Sigma \|_{L^q(\Sigma)}^2 &= \bigl\| |f|\big|_\Sigma \bigr\|_{L^q(\Sigma)}^2 \\
      &\leq C \bigl\| |f|\big|_\Sigma \bigr\|_{H^{1/2-\mu/2}(\Sigma)}^2 \\
      & \leq C \bigl\| |f|\big|_\Sigma \bigr\|_{H^{1/2-\mu'/2}(\Sigma)}^2 \\
      & \leq C \bigl\| |f| \bigr\|_{H^{1-\mu'/2}(\mathbb{R}^n)}^2 \\
      &\leq \delta \bigl\| |f| \bigr\|_{H^1(\mathbb{R}^n)}^2 + (C_\delta-1) \bigl\| |f| \bigr\|_{L^2(\mathbb{R}^n)}^2 \\
      &\leq \delta \| f \|_{\mathcal{H}_{A,Q}}^2 + C_\delta \| f \|_{L^2(\mathbb{R}^n)}^2,
    \end{split}
  \end{equation}
  where \cite[Theorem~3.30]{HT08} or 
\cite[Satz~11.18~e)]{W00} was used in the second last step, and Lemma~\ref{lemma_diamagnetic_inequality} in the last step. Since $C_0^\infty(\mathbb{R}^n)$ is dense in $\mathcal{H}_{A,Q}$ by Lemma~\ref{lemma_H_A_Q_basic}, the claim of this proposition follows from~\eqref{trace_theorem_Sobolev}.
\end{proof}

\subsection{Rigorous definition of $H_{A,Q,\alpha}$} \label{section_H_delta}

Throughout this subsection, assume that $A \in L^2_{\textup{loc}}(\mathbb{R}^n; \mathbb{R}^n)$
and $Q = Q_1 + Q_2$ with $0 \leq Q_1 \in L^1_{\textup{loc}}(\mathbb{R}^n)$ and $Q_2 \in L^{p'}(\mathbb{R}^n) + L^\infty(\mathbb{R}^n)$, where $p' \geq \frac{n}{2}$ for $n>2$ and $p' > 1$ for $n=2$,
are as in~\eqref{assumption_A} and~\eqref{assumption_Q}, respectively. Moreover, let $N \in \mathbb{N}$, $\Sigma^{(1)}, \dots, \Sigma^{(N)}$ be admissible $C^2$-hypersurfaces as in Definition~\ref{definition_hypersurface} such that the Hausdorff measure of $\Sigma^{(k)} \cap \Sigma^{(l)}$ is zero for $k \neq l$, and $\Sigma := \bigcup_{k=1}^N \Sigma^{(k)}$. 
For $p \in [1, \infty]$ the $L^p$-spaces on $\Sigma$ are defined by
\begin{equation*}
  L^p(\Sigma) := \bigoplus_{k=1}^N L^p(\Sigma^{(k)})
\end{equation*}
and for $f = (f_k)_{k=1}^N \in L^1(\Sigma)$ its integral is
\begin{equation*}
  \int_\Sigma f \text{d} \sigma := \sum_{k=1}^N \int_{\Sigma^{(k)}} f_k \text{d} \sigma.
\end{equation*}

The goal in this subsection is to study the form $\mathfrak{h}_{A,Q,\alpha}$ from~\eqref{def_H_alpha_form_intro}, which then leads to a rigorous definition of the formal expression in~\eqref{def_H_alpha_formal_intro}. The proof of the following proposition follows similar ideas as in the construction of non-magnetic Schr\"odinger operators with $\delta$-potentials, see, for instance, \cite[Proposition~2.1]{BLS23}, \cite[Section~4]{BEKS94} or \cite[Lemma~2.7]{BEHL17}.

\begin{prop} \label{proposition_delta_form}
  Let $\gamma \in (0,1)$ be fixed, $p = \frac{n-1}{1-\gamma}$, and
  \begin{equation*} 
    \alpha := (\alpha^{(k)})_{k=1}^N \in L^p(\Sigma) + L^\infty(\Sigma) .
  \end{equation*}
  Then, the quadratic form
  \begin{equation*} 
    \mathfrak{h}_{A,Q,\alpha}[f] := \int_{\mathbb{R}^n} \bigl(\bigl|(i \nabla + A) f\bigr|^2 + Q |f|^2 \bigr) \textup{d} x + \int_\Sigma \alpha |\gamma_D f|^2 \textup{d} \sigma, \quad \dom \mathfrak{h}_{A,Q,\alpha} = \mathcal{H}_{A, Q},
  \end{equation*}
  is densely defined, sectorial, and closed. Moreover, there exist constants $C_1, C_2 > 0$ such that
  \begin{equation} \label{norm_estimate_delta_form}
    \| f \|_{\mathcal{H}_{A,Q}}^2 \leq C_1 \big(\textup{Re}\, \mathfrak{h}_{A,Q,\alpha}[f] + C_2 \| f \|_{L^2(\mathbb{R}^n)}^2\big)
  \end{equation}
  holds for all $f \in \mathcal{H}_{A,Q}$.
\end{prop}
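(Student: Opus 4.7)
The plan is to write $\mathfrak{h}_{A,Q,\alpha} = \mathfrak{h}_{A,Q} + \mathfrak{h}_\alpha$, where $\mathfrak{h}_{A,Q}$ is the form from Lemma~\ref{lemma_Q_2} and
\[
\mathfrak{h}_\alpha[f] := \int_\Sigma \alpha\,|\gamma_D f|^2 \textup{d}\sigma, \qquad \dom \mathfrak{h}_\alpha = \mathcal{H}_{A,Q},
\]
and then to prove that $\mathfrak{h}_\alpha$ is form bounded with respect to $\|\cdot\|_{\mathcal{H}_{A,Q}}^2$ with arbitrarily small relative bound. Since Lemma~\ref{lemma_Q_2} asserts that $\mathfrak{h}_{A,Q}$ is densely defined, sectorial, and closed with real part equivalent (modulo a constant shift) to $\|\cdot\|_{\mathcal{H}_{A,Q}}^2$, the standard perturbation theorem for closed sectorial forms, e.g.\ \cite[Theorem~VI.1.33]{K95}, will then yield that $\mathfrak{h}_{A,Q,\alpha}$ is densely defined, sectorial, and closed, and the norm estimate~\eqref{norm_estimate_delta_form} will follow as a byproduct.

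To establish the relative bound, I would split $\alpha^{(k)} = \alpha_p^{(k)} + \alpha_\infty^{(k)}$ with $\alpha_p^{(k)} \in L^p(\Sigma^{(k)})$ and $\alpha_\infty^{(k)} \in L^\infty(\Sigma^{(k)})$, and treat the two pieces separately. For the $L^p$ piece, H\"older's inequality with conjugate exponents $p$ and $p/(p-1)$ yields
\[
\int_{\Sigma^{(k)}} |\alpha_p^{(k)}|\,|\gamma_D f|^2 \textup{d}\sigma \leq \|\alpha_p^{(k)}\|_{L^p(\Sigma^{(k)})}\,\|\gamma_D f\|_{L^q(\Sigma^{(k)})}^2,
\]
with $q = 2p/(p-1) = 2(n-1)/(n-2+\gamma)$; this matches exactly the trace exponent of Proposition~\ref{proposition_trace_theorem} taken with $\mu = \gamma \in (0,1)$. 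For the $L^\infty$ piece, I would invoke Proposition~\ref{proposition_trace_theorem} with the endpoint value $\mu = 1$, which gives (since then $q = 2$) continuity of $\gamma_D:\mathcal{H}_{A,Q}\to L^2(\Sigma^{(k)})$, so that
\[
\int_{\Sigma^{(k)}} |\alpha_\infty^{(k)}|\,|\gamma_D f|^2 \textup{d}\sigma \leq \|\alpha_\infty^{(k)}\|_{L^\infty(\Sigma^{(k)})}\,\|\gamma_D f\|_{L^2(\Sigma^{(k)})}^2.
\]
Summing over $k = 1,\dots,N$ and applying~\eqref{trace_estimate} with an arbitrarily small $\delta > 0$ in each of the two estimates produces, for every $\eta > 0$, a constant $C_\eta > 0$ such that
\[
|\mathfrak{h}_\alpha[f]| \leq \eta\,\|f\|_{\mathcal{H}_{A,Q}}^2 + C_\eta\,\|f\|_{L^2(\mathbb{R}^n)}^2
\]
for all $f\in\mathcal{H}_{A,Q}$.

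Granted this small relative form bound, the perturbation result for closed sectorial forms directly gives that $\mathfrak{h}_{A,Q,\alpha}$ is sectorial and closed on the domain $\mathcal{H}_{A,Q}$, which is dense in $L^2(\mathbb{R}^n)$ because it contains $C_0^\infty(\mathbb{R}^n)$ by Lemma~\ref{lemma_H_A_Q_basic}. To obtain~\eqref{norm_estimate_delta_form}, I would fix $\eta < C_1$ in the above bound and combine with the lower bound $\textup{Re}\,\mathfrak{h}_{A,Q}[f] \geq C_1 \|f\|_{\mathcal{H}_{A,Q}}^2 - C_3 \|f\|_{L^2(\mathbb{R}^n)}^2$ from Lemma~\ref{lemma_Q_2}, obtaining
\[
\textup{Re}\,\mathfrak{h}_{A,Q,\alpha}[f] \geq (C_1 - \eta)\|f\|_{\mathcal{H}_{A,Q}}^2 - (C_3 + C_\eta)\|f\|_{L^2(\mathbb{R}^n)}^2,
\]
which is the desired estimate after dividing by $C_1 - \eta$ and relabeling the constants.

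The step that requires the most care is the treatment of $\alpha_\infty^{(k)}$ on a possibly unbounded component $\Sigma^{(k)}$: a naive H\"older estimate there would demand an a priori $L^2(\Sigma^{(k)})$-trace that is not visible from the generic range $q > 2$ in Proposition~\ref{proposition_trace_theorem}. The key observation that unlocks the argument is that the endpoint $\mu = 1$ is admitted in that proposition and corresponds precisely to $q = 2$, so the $L^2$-trace with arbitrarily small relative bound is available at no extra cost.
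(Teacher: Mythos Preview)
Your proposal is correct and follows essentially the same route as the paper's proof: the same splitting $\alpha^{(k)}=\alpha_p^{(k)}+\alpha_\infty^{(k)}$, the same H\"older estimate with $q=2(n-1)/(n-2+\gamma)$ combined with Proposition~\ref{proposition_trace_theorem} at $\mu=\gamma$ for the $L^p$ piece, the same use of Proposition~\ref{proposition_trace_theorem} at $\mu=1$ for the $L^\infty$ piece, and the same appeal to \cite[Theorem~VI.1.33]{K95}. The only difference is cosmetic---you spell out the derivation of~\eqref{norm_estimate_delta_form} a bit more explicitly than the paper does.
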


The $m$-sectorial operator that is associated with $\mathfrak{h}_{A,Q,\alpha}$ via the first representation theorem \cite[Theorem~VI.2.1]{K95} is denoted by $H_{A,Q,\alpha}$, i.e.
\begin{equation} \label{def_H_alpha}
  \begin{split}
    H_{A,Q,\alpha} f &= \bigl((i \nabla + A)^2 + Q\bigr) f \text{ in } \mathbb{R}^n \setminus \Sigma, \\
    \dom  H_{A,Q,\alpha} &= \bigl\{ f \in \mathcal{H}_{A,Q}: \bigl((i \nabla + A)^2 + Q\bigr) f \in L^2(\mathbb{R}^n \setminus \Sigma), \\
    &\qquad\quad  \mathfrak{h}_{A,Q,\alpha}[f, g] = \bigl( \bigl((i \nabla + A)^2 + Q\bigr) f, g \bigr)_{L^2(\mathbb{R}^n)} \, \forall g \in \mathcal{H}_{A, Q} \bigr\};
  \end{split}
\end{equation}
here, the fact that $H_{A,Q,\alpha}$ acts as $(i \nabla + A)^2 + Q$ in $\mathbb{R}^n \setminus \Sigma$ follows from the inclusion $C_0^\infty(\mathbb{R}^n \setminus \Sigma) \subset \mathcal{H}_{A,Q}$, see Lemma~\ref{lemma_H_A_Q_basic}, and the definition of the distributional expression $(i \nabla + A)^2 + Q$; cf. \cite[Lemma~5]{LS81} for a similar argument.

\begin{proof}[Proof of Proposition~\ref{proposition_delta_form}]
  By assumption, for each $k \in \{ 1, \dots, N \}$ there exist functions $\alpha^{(k)}_p \in L^p(\Sigma^{(k)})$ and $\alpha^{(k)}_\infty \in L^\infty(\Sigma^{(k)})$ such that $\alpha^{(k)} = \alpha^{(k)}_p + \alpha^{(k)}_\infty$. Set $\alpha_p := (\alpha^{(k)}_p)_{k=1}^N \in L^p(\Sigma)$ and $\alpha_\infty := (\alpha^{(k)}_\infty)_{k=1}^N \in L^\infty(\Sigma)$, so that $\alpha= \alpha_p + \alpha_\infty$. Define for $* \in \{ p, \infty \}$ the form
  \begin{equation*}
    \mathfrak{a}_*[f] := \sum_{k=1}^N \int_{\Sigma^{(k)}} \alpha^{(k)}_* |\gamma_D f|^2 \textup{d} \sigma, \quad \dom \mathfrak{a}_* = \mathcal{H}_{A, Q}.
  \end{equation*}
  Let $\delta > 0$ and $f \in \mathcal{H}_{A, Q}$ be fixed. Using H\"older's inequality with $p = \frac{n-1}{1-\gamma}$ and $q = (1-\frac{1}{p})^{-1} =  \frac{n-1}{n-2+\gamma}$ and Proposition~\ref{proposition_trace_theorem} (with $\mu = \gamma$) one finds that 
  \begin{equation*}
    \begin{split}
      \big|\mathfrak{a}_p[f]\big| &\leq \left( \sum_{k=1}^N \int_{\Sigma^{(k)}} |\alpha^{(k)}_p|^p \textup{d} \sigma \right)^{1/p} \left( \sum_{k=1}^N \int_{\Sigma^{(k)}} |\gamma_D f|^{2q} \textup{d} \sigma \right)^{1/q} \\
      &\leq  \| \alpha_p\|_{L^p(\Sigma)} \bigl(\delta \| f \|_{\mathcal{H}_{A, Q}}^2 + C_\delta \| f \|_{L^2(\mathbb{R}^n)}^2 \bigr).
    \end{split}
  \end{equation*}
  Likewise, Proposition~\ref{proposition_trace_theorem} applied for $\mu = 1$ yields
  \begin{equation*}
    \begin{split}
      \big|\mathfrak{a}_\infty[f]\big| \leq  \| \alpha_\infty\|_{L^\infty(\Sigma)} \bigl(\delta \| f \|_{\mathcal{H}_{A, Q}}^2 + C_\delta \| f \|_{L^2(\mathbb{R}^n)}^2 \bigr).
    \end{split}
  \end{equation*}
  In particular, the forms $\mathfrak{a}_p$ and $\mathfrak{a}_\infty$ are both well-defined and form bounded with bound smaller than $\frac{1}{2}$ with respect to $\mathfrak{h}_{A, Q}$ defined by~\eqref{unperturbed_form}. Therefore, by the KLMN theorem \cite[Theorem~VI.1.33]{K95} the form $\mathfrak{h}_{A, Q, \alpha} = \mathfrak{h}_{A, Q} + \mathfrak{a}_p + \mathfrak{a}_\infty$ is sectorial and closed, as $\mathfrak{h}_{A,Q}$ has these properties by Lemma~\ref{lemma_Q_2}, and the estimate~\eqref{norm_estimate_delta_form} is true.
\end{proof}

\begin{remark} \label{remark_h_V_eps_closed}
  By using, e.g., \cite[Lemma~3.2]{ABHS25} instead of Proposition~\ref{proposition_trace_theorem} one can show in the same way as in the proof of Proposition~\ref{proposition_delta_form} that the form $\mathfrak{h}_{A,Q,V_\varepsilon}$ in~\eqref{def_H_V_eps_form_intro} is closed and sectorial. However, this will follow for sufficiently small $\varepsilon > 0$ automatically as a byproduct in the proof of Theorem~\ref{theorem_approximation} and thus, we do not provide the proof of this result here.
\end{remark}

\section{Proof of Theorem~\ref{theorem_approximation}} \label{section_convergence}

The following preparatory lemma, in which the traces of functions in $\mathcal{H}_{A, Q}$ on $\Sigma$ and a slightly shifted hypersurface in the normal direction are compared, is the key ingredient to prove the main result of this paper, Theorem~\ref{theorem_approximation}. In the proof, we use properties of the shift in the normal direction - a strategy that is inspired by \cite[Section~3.2]{BHSL25}. Recall that for an admissible $C^2$-hypersurface $\Sigma$ as in Definition~\ref{definition_hypersurface} the map $\iota$ is defined by~\eqref{def_iota}.

\begin{lem} \label{lemma_shift}
  Let $A \in L^2_{\textup{loc}}(\mathbb{R}^n; \mathbb{R}^n)$ and $Q = Q_1 + Q_2$ with $0 \leq Q_1 \in L^1_{\textup{loc}}(\mathbb{R}^n)$ and $Q_2 \in L^{p'}(\mathbb{R}^n)$, where $p' \geq \frac{n}{2}$ for $n>2$ and $p' > 1$ for $n=2$. Moreover, let $\Sigma$ be an admissible $C^2$-hypersurface with unit normal vector field $\nu$, $q =  2 \frac{n-1}{n-2+\mu}$ for $\mu \in (0,1]$, and $\mu' \in (0, \mu] \cap (0,1)$. Then, there exists $\beta_0 > 0$ and a constant $C_1 > 0$ such that for all $r \in (-\beta_0, \beta_0)$ and all $f \in \mathcal{H}_{A,Q}$ the estimate
  \begin{equation} \label{equation_trace_difference}
    \left( \int_{\Sigma} \bigl||f(\iota(x_\Sigma, r))| - |f(\iota(x_\Sigma,0))| \bigr|^q \textup{d} \sigma(x_{\Sigma}) \right)^{1/q} \leq C_1 |r|^{\mu'/2} \| f \|_{\mathcal{H}_{A, Q}}
  \end{equation}
  is true. In particular, there exists a constant $C_2 > 0$ such that for all $r \in (-\beta_0, \beta_0)$ and all $f \in \mathcal{H}_{A,Q}$
  \begin{equation} \label{equation_trace_uniformly_bounded}
    \left( \int_{\Sigma} |f(\iota(x_\Sigma, r))|^{q} \textup{d} \sigma(x_{\Sigma}) \right)^{1/q} \leq C_2 \| f \|_{\mathcal{H}_{A, Q}}.
  \end{equation}
\end{lem}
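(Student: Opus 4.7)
My strategy is to reduce to $H^1(\mathbb{R}^n)$ via the diamagnetic inequality, flatten the tubular neighborhood of $\Sigma$ using $\iota$ and the charts of Definition~\ref{definition_hypersurface}, and then prove a sharp model estimate on a flat strip by interpolation between an elementary $L^2$-bound and the half-derivative trace theorem.

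\emph{Reduction and localization.} By Lemma~\ref{lemma_diamagnetic_inequality}, $|f| \in H^1(\mathbb{R}^n)$ with $\||f|\|_{H^1(\mathbb{R}^n)} \leq \|f\|_{\mathcal{H}_{A,Q}}$, and since the integrand in~\eqref{equation_trace_difference} depends only on $|f|$, it is enough to prove the analogue for arbitrary $h \in H^1(\mathbb{R}^n)$. I would choose $\beta_0 \leq \beta$ so small that $\bigcup_{|r|\leq \beta_0}\iota(\Sigma,r) \subset \bigcup_{k=1}^p V_k$ and take a partition of unity $\{\chi_k\}$ subordinate to $\{V_k\}$ on this set. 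On each chart, the map
\begin{equation*}
    \Phi_k(x',t) := \kappa_k(x',\zeta_k(x')) + t\, \nu\bigl(\kappa_k(x',\zeta_k(x'))\bigr)
\end{equation*}
is a $C^2$-diffeomorphism with Jacobians bounded above and below uniformly in $t \in (-\beta_0,\beta_0)$, thanks to Proposition~\ref{proposition_tubular_neighbourhood}(ii) and the $C^2_b$-regularity of $\zeta_k$. Pulling $\chi_k h$ back through $\Phi_k$ and extending by zero yields $\tilde h_k \in H^1(\mathbb{R}^n)$, and by the triangle inequality the claim reduces to the model estimate
\begin{equation*}
    \bigl\| \tilde h(\cdot,r) - \tilde h(\cdot,0) \bigr\|_{L^q(\mathbb{R}^{n-1})} \leq C|r|^{\mu'/2} \| \tilde h \|_{H^1(\mathbb{R}^n)}, \qquad \tilde h \in H^1(\mathbb{R}^n).
\end{equation*}

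\emph{Model estimate and uniform bound.} I would prove the finer interpolation inequality
\begin{equation*}
    \bigl\| \tilde h(\cdot,r) - \tilde h(\cdot,0) \bigr\|_{H^s(\mathbb{R}^{n-1})} \leq C|r|^{1/2-s} \| \tilde h \|_{H^1(\mathbb{R}^n)}, \qquad s \in [0,1/2],
\end{equation*}
at the two endpoints and interpolate. For $s=0$ the identity $\tilde h(x',r)-\tilde h(x',0) = \int_0^r \partial_t \tilde h(x',t)\,\textup{d} t$ together with Cauchy--Schwarz and Fubini gives the factor $|r|^{1/2}$; for $s=1/2$ the classical trace theorem provides a uniform bound. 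Complex interpolation $[L^2,H^{1/2}]_\theta = H^{\theta/2}$ fills in $s \in (0,1/2)$. Specializing to $s = 1/2 - \mu'/2 \in (0,1/2)$ and chaining the Sobolev embeddings $H^{1/2-\mu'/2}(\mathbb{R}^{n-1}) \subset H^{1/2-\mu/2}(\mathbb{R}^{n-1}) \hookrightarrow L^q(\mathbb{R}^{n-1})$, exactly as in~\eqref{trace_theorem_Sobolev}, produces~\eqref{equation_trace_difference}. Finally, \eqref{equation_trace_uniformly_bounded} follows from~\eqref{equation_trace_difference}, Proposition~\ref{proposition_trace_theorem}, and the triangle inequality, since $|r|^{\mu'/2} \leq \beta_0^{\mu'/2}$.

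The principal technical nuisance is the localization: because $\iota$ is not a translation, one must verify that pullback through $\Phi_k$ preserves $H^1$-norms up to constants uniform in $r$, which ultimately boils down to the uniform lower bound on $\det(I - tW(x_\Sigma))$ from Proposition~\ref{proposition_tubular_neighbourhood}(ii). The analytic crux is the interpolation step; the exponent $\mu'/2$ (rather than $\mu/2$) reflects that the Sobolev embedding $H^s(\mathbb{R}^{n-1}) \hookrightarrow L^q(\mathbb{R}^{n-1})$ forces $s>0$ whenever $q>2$, which is why the strict condition $\mu' < 1$ is imposed.
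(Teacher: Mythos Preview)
Your proposal is correct and shares the same backbone as the paper's proof---reduce to $|f|\in H^1(\mathbb{R}^n)$ via the diamagnetic inequality, gain the factor $|r|^{\mu'/2}$ by interpolation, and pass to $L^q$ by a critical Sobolev embedding---but the two arguments differ in how the ``shift by $r$ along $\nu$'' is handled and in \emph{where} the interpolation takes place.

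The paper avoids charts entirely: it extends $\nu$ to a $C^1_b$ vector field $\widetilde{\nu}$ on all of $\mathbb{R}^n$, sets $f_r(x)=f(x+r\widetilde{\nu}(x))$, and proves the global estimate $\|f-f_r\|_{H^{1-\delta}(\mathbb{R}^n)}\le C|r|^{\delta}\|f\|_{H^1(\mathbb{R}^n)}$ by interpolating between $\delta=0$ and $\delta=1$ in the full $\mathbb{R}^n$ scale. Only \emph{afterwards} does it take the trace $H^{1-\mu'/2}(\mathbb{R}^n)\to H^{1/2-\mu'/2}(\Sigma)\hookrightarrow L^q(\Sigma)$. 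Your route is the mirror image: you first flatten via the chart maps $\Phi_k$ (paying the price of a partition of unity and Jacobian bounds from Proposition~\ref{proposition_tubular_neighbourhood}(ii)), and then interpolate directly in the trace scale $H^s(\mathbb{R}^{n-1})$, $s\in[0,\tfrac12]$, between the fundamental-theorem bound at $s=0$ and the trace theorem at $s=\tfrac12$. The paper's global-shift trick buys a shorter and chart-free argument; your model-space computation is more elementary at the endpoints and in fact yields the exponent $\mu/2$ (not just $\mu'/2$) when $\mu=1$, since then $q=2$ and your $s=0$ bound already gives $|r|^{1/2}$---so your explanation for the restriction $\mu'<1$ does not quite apply to your own argument, though it does explain why the paper's trace step $H^{1-\mu'/2}\to H^{1/2-\mu'/2}$ needs $\mu'<1$. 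One small technical point to watch in your localization: the extension of $\tilde h_k$ by zero to all of $\mathbb{R}^n$ need not lie in $H^1$ at the slab boundary $t=\pm\beta_0$; either work directly on the slab (both endpoint estimates are local in $t$) or insert an additional smooth cutoff in the normal variable.
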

\begin{proof}
  First, note that it suffices to prove~\eqref{equation_trace_difference}, as this estimate, the triangle inequality, and Proposition~\ref{proposition_trace_theorem}  imply
  \begin{equation*}
    \begin{split}
       \left( \int_{\Sigma} |f(\iota(x_\Sigma, r))|^{q} \textup{d} \sigma(x_{\Sigma}) \right)^{1/q}
       &\leq   \left( \int_{\Sigma} \bigl||f(\iota(x_\Sigma, r))| - |f(\iota(x_\Sigma,0))| \bigr|^q \textup{d} \sigma(x_{\Sigma}) \right)^{1/q} \\
       &\qquad \qquad \qquad \qquad + \left( \int_{\Sigma} |f(\iota(x_\Sigma, 0))|^{q} \textup{d} \sigma(x_{\Sigma}) \right)^{1/q} \\
       &\leq (C_1 |r|^{\mu'/2} + C_3) \| f \|_{\mathcal{H}_{A, Q}},
    \end{split}
  \end{equation*}
  where $C_3 = \delta + C_\delta$ is the constant from Proposition~\ref{proposition_trace_theorem} for a fixed choice of $\delta$. This yields the claim in~\eqref{equation_trace_uniformly_bounded}. The estimate in~\eqref{equation_trace_difference} is shown in two steps. In \textit{Step~1} a suitable Sobolev norm of an extension of $f(\iota(x_\Sigma, r)) - f(\iota(x_\Sigma, 0))$ is estimated, which is used in \textit{Step~2} to deduce~\eqref{equation_trace_difference}.
  
  \textit{Step~1.} Let $\widetilde{\nu}$ be a $C^1_b$-extension of $\nu$ to $\mathbb{R}^n$. To show that such an extension exists, let $\Omega$ be as in Definition~\ref{definition_hypersurface} and $\widehat{\nu}$ be the normal vector field at $\partial \Omega$; then, as $\Sigma \subset \partial \Omega$, the function $\widehat{\nu}$ is an extension of $\nu$ to $\partial \Omega$. Moreover, choose a real-valued function $\phi \in C^1_b(\mathbb{R})$ with $\phi(0) = 1$ and $\textup{supp}\, \phi \subset (-\beta^\Omega, \beta^\Omega)$, where $\beta^\Omega$ is the number $\beta$ from Proposition~\ref{proposition_tubular_neighbourhood} for the admissible hypersurface $\partial \Omega$. Then one possible choice of $\widetilde{\nu}$ is given by
  \begin{equation*}
    \widetilde{\nu}(x) := \begin{cases} \widehat{\nu}(x_{\partial \Omega}) \phi(t), & \text{if } x = x_{\partial \Omega} + t \widehat{\nu}(x_{\partial \Omega}) \text{ for } x_{\partial \Omega} \in \partial \Omega, t \in (-\beta^\Omega, \beta^\Omega), \\0, &\text{otherwise}. \end{cases}
  \end{equation*}
  Define for $f \in H^1(\mathbb{R}^n)$ the function $f_r \in H^1(\mathbb{R}^n)$ by $f_r(x) := f(x + r \widetilde{\nu}(x))$.
  The aim of this step is to prove for all $\delta \in [0,1]$ the estimate
  \begin{equation} \label{equation_shift_operator}
    \| f - f_r \|_{H^{1-\delta}(\mathbb{R}^n)} \leq C |r|^{\delta} \| f \|_{H^1(\mathbb{R}^n)}
  \end{equation}
  for a suitable constant $C > 0$, for which we closely follow the proof of \cite[Proposition~3.3]{BHSL25}.
  
  First, \eqref{equation_shift_operator} is shown for $\delta = 1$. With the main theorem of calculus, one finds for $f \in C_0^\infty(\mathbb{R}^n)$ that
  \begin{equation*}
    \begin{split}
      \| f_r - f \|_{L^2(\mathbb{R}^n)}^2 &= \int_{\mathbb{R}^n} \bigl|f(x + r \widetilde{\nu}(x)) - f(x) \bigr|^2 \textup{d} x \\
      &= \int_{\mathbb{R}^n} \left|\int_0^r \frac{\textup{d}}{\textup{d} s} f(x + s \widetilde{\nu}(x)) \textup{d} s \right|^2 \textup{d} x \\
      &= \int_{\mathbb{R}^n} \left|\int_0^r \nabla f(x + s \widetilde{\nu}(x)) \cdot \widetilde{\nu}(x) \textup{d} s \right|^2 \textup{d} x \\
      &\leq \int_{\mathbb{R}^n} \int_0^{|r|} |\nabla f(x + s \widetilde{\nu}(x))|^2 \textup{d} s \int_0^{|r|} |\widetilde{\nu}(x)|^2 \textup{d} s  \textup{d} x \\
      &\leq |r| \| \widetilde{\nu} \|_{L^\infty(\mathbb{R}^n)}^2  \int_0^{|r|} \int_{\mathbb{R}^n} |\nabla f(x + s \widetilde{\nu}(x))|^2 \textup{d} x \textup{d} s,
    \end{split}
  \end{equation*}
  where the Cauchy-Schwarz inequality and Fubini's theorem were used. To proceed, denote the Jacobi matrix of the mapping $x \mapsto x + s \widetilde{\nu}(x)$ by $D (x + s \widetilde{\nu}(x))$. Then, the eigenvalues of $D (x + s \widetilde{\nu}(x))$ can be estimated from below by $1-|r| \| \widetilde{\nu} \|_{C^1_b(\mathbb{R}^n)}$. Suppose that $\beta_0 \in (0, \beta^\Omega)$ is chosen sufficiently small such that there exists a constant $C>0$ such that $1-|r| \| \widetilde{\nu} \|_{C^1_b(\mathbb{R}^n)} \geq C$ for all $r \in (-\beta_0, \beta_0)$. Then, 
  \begin{equation*}
    \det D (x + s \widetilde{\nu}(x)) \geq (1-|r| \| \widetilde{\nu} \|_{C^1_b(\mathbb{R}^n)})^n \geq C^n
  \end{equation*}
  and we can continue estimating  
  \begin{equation*}
    \begin{split}
      \| f - f_r \|_{L^2(\mathbb{R}^n)}^2 
      &\leq |r| \| \widetilde{\nu} \|_{L^\infty(\mathbb{R}^n)}^2  \int_0^{|r|} \int_{\mathbb{R}^n} |\nabla f(x + s \widetilde{\nu}(x))|^2 \frac{|\det D (x + s \widetilde{\nu}(x))|}{|\det D (x + s \widetilde{\nu}(x))|} \textup{d} x \textup{d} s \\
      &\leq \frac{|r| \| \widetilde{\nu} \|_{L^\infty(\mathbb{R}^n)}^2}{C^{n}}  \int_0^{|r|} \int_{\mathbb{R}^n} |\nabla f(x + s \widetilde{\nu}(x))|^2 |\det D (x + s \widetilde{\nu}(x))| \textup{d} x \textup{d} s \\
      &= \frac{|r| \| \widetilde{\nu} \|_{L^\infty(\mathbb{R}^n)}^2}{C^{n}}  \int_0^{|r|} \int_{\mathbb{R}^n} |\nabla f(y)|^2 \textup{d} y \textup{d} s,
    \end{split}
  \end{equation*}
  where the substitution $y = x + s \widetilde{\nu}(x)$ was used in the last step. Since the last integral is independent of $s$ and $C_0^\infty(\mathbb{R}^n)$ is dense in $H^1(\mathbb{R}^n)$, the claim in~\eqref{equation_shift_operator} for $\delta = 1$ follows.
  
  Next, we prove the statement in~\eqref{equation_shift_operator} for $\delta = 0$. With a similar calculation as in the case $\delta = 1$ and the substitution $y = x + r \widetilde{\nu}(x)$ one finds that
  \begin{equation*}
    \begin{split}
      \| \nabla f_r \|_{L^2(\mathbb{R}^n)}^2
      &= \int_{\mathbb{R}^n} \bigl| (I+r D \widetilde{\nu}(x)) \cdot \nabla f(x + r \widetilde{\nu}(x)) \bigr|^2 \textup{d} x \\
      &\leq (1+|r|\| \widetilde{\nu} \|_{C^1_b(\mathbb{R}^n)})^2  \int_{\mathbb{R}^n} \bigl| \nabla f(x + r \widetilde{\nu}(x)) \bigr|^2 \frac{|\det D (x + r \widetilde{\nu}(x))|}{|\det D (x + r \widetilde{\nu}(x))|} \textup{d} x \\
      &\leq \frac{(1+|r|\| \widetilde{\nu} \|_{C^1_b(\mathbb{R}^n)})^2}{(1-|r| \| \widetilde{\nu} \|_{C^1_b(\mathbb{R}^n)})^n}   \int_{\mathbb{R}^n} |\nabla f(y)|^2 \textup{d} y.
    \end{split}
  \end{equation*}
  By combining this with the triangle inequality and~\eqref{equation_shift_operator} for $\delta=1$ one finds that
  \begin{equation*} 
    \| f - f_r \|_{H^{1}(\mathbb{R}^n)} \leq \| f - f_r \|_{L^2(\mathbb{R}^n)} + \| \nabla f \|_{L^2(\mathbb{R}^n)} + \| \nabla f_r \|_{L^2(\mathbb{R}^n)} \leq C \| f \|_{H^1(\mathbb{R}^n)},
  \end{equation*}
  which is the claim for $\delta = 0$.
  
  Finally, having the statement in~\eqref{equation_shift_operator} for $\delta = 0$ and for $\delta=1$, the claim for $\delta \in (0,1)$ follows with an interpolation argument.
  
  \textit{Step~2.} Let $f \in C_0^\infty(\mathbb{R}^n)$. It follows from \cite[Theorem 8.12.6.I]{B12} that $L^q(\Sigma)$ is continuously embedded in $H^{1/2-\mu/2}(\Sigma)$, which is again continuously embedded in $H^{1/2-\mu'/2}(\Sigma)$. Therefore, the trace theorem for Sobolev spaces \cite[Theorem~2]{M87} implies
  \begin{equation*} 
    \begin{split}
      \left( \int_{\Sigma} \bigl||f(\iota(x_\Sigma, r))| - |f(\iota(x_\Sigma, 0))| \bigr|^q \textup{d} \sigma(x_{\Sigma}) \right)^{1/q} 
      & \leq C \bigl\| \gamma_D(|f|_r - |f|) \big| \bigr\|_{H^{1/2-\mu/2}(\Sigma)} \\
       & \leq C \bigl\| \gamma_D(|f|_r - |f|) \big| \bigr\|_{H^{1/2-\mu'/2}(\Sigma)} \\
      &\leq C \bigl\| |f|_r - |f| \bigr\|_{H^{1-\mu'/2}(\mathbb{R}^n)} \\
      &\leq C |r|^{\mu'/2} \big\| |f| \big\|_{H^1(\mathbb{R}^n)},
    \end{split}
  \end{equation*}
  where~\eqref{equation_shift_operator} was used in the last step. By combining this with Lemma~\ref{lemma_diamagnetic_inequality} one finds that~\eqref{equation_trace_difference} is true for $f \in C_0^\infty(\mathbb{R}^n)$. Since $C_0^\infty(\mathbb{R}^n)$ is dense in $\mathcal{H}_{A, Q}$ by Lemma~\ref{lemma_H_A_Q_basic}, the claim follows for all $f \in \mathcal{H}_{A, Q}$.
\end{proof}

\begin{remark}
  As in the proof of \cite[Theorem~4.5]{BEHL19}, see equation~(B.9) there, one can show that for $q=2$ the estimate in~\eqref{equation_trace_difference} is true for $\mu' = 1$. This can be used to show that the difference in~\eqref{equation_convergence} can be estimated by $C \varepsilon^{1/2}$, if $V_\varepsilon \in L^\infty(\mathbb{R}^n)$. Since a main focus in this paper is on more singular potentials $V_\varepsilon$, this small possible improvement is not further investigated here.
\end{remark}

\begin{proof}[Proof of Theorem~\ref{theorem_approximation}]
  First, note that $V^{(k)} \in L^p(\mathbb{R}^n) + L^\infty(\mathbb{R}^n)$ together with Proposition~\ref{proposition_tubular_neighbourhood}~(iv) implies that $\alpha$ defined by~\eqref{alpha_limit} belongs to $L^p(\Sigma) + L^\infty(\Sigma)$. Consider the quadratic form
  \begin{equation} \label{form_difference}
    \begin{split}
      \mathfrak{a}[f] := \int_{\mathbb{R}^n} V_\varepsilon |f|^2 \textup{d} x - \int_\Sigma \alpha |\gamma_D f|^2 \textup{d} \sigma, \quad \dom \mathfrak{a} = \mathcal{H}_{A, Q}.
    \end{split}
  \end{equation}
  We will show below that $\mathfrak{a}$ is well defined and there exists a constant $C > 0$ such that  for all $\varepsilon \in (0, \varepsilon_0)$ with $\varepsilon_0 \in (0, \beta_0)$, where $\beta_0$ is as in Lemma~\ref{lemma_shift},
  \begin{equation} \label{estimate_difference}
    |\mathfrak{a}[f]| \leq C \varepsilon^{\gamma_1/2} \bigl( \textup{Re}\, \mathfrak{h}_{A, Q, \alpha}[f] +  \| f \|_{L^2(\mathbb{R}^n)}^2 \bigr), \quad f \in \mathcal{H}_{A, Q}.
  \end{equation}
  This implies then first with the KLMN theorem \cite[Theorem~VI.1.33]{K95} for  $\varepsilon \in (0, \varepsilon_0)$ with $\varepsilon_0$ sufficiently small that $\mathfrak{h}_{A, Q, V_\varepsilon} = \mathfrak{h}_{A, Q, \alpha} + \mathfrak{a}$ in~\eqref{def_H_V_eps_form_intro} is closed and sectorial, as $\mathfrak{h}_{A, Q, \alpha}$ has this property by Proposition~\ref{proposition_delta_form}. And second, provided that $\varepsilon_0$ is small enough, the estimate in~\eqref{estimate_difference} and \cite[Theorem~VI.3.4]{K95} show that there exists $\lambda_0 < 0$ such that $(-\infty, \lambda_0) \subset \rho(H_{A, Q, \alpha}) \cap \bigcap_{\varepsilon \in (0, \varepsilon_0)} \rho(H_{A, Q, V_{\varepsilon}})$ and that~\eqref{equation_convergence} is true.
  
  So it remains to show that~\eqref{estimate_difference} is true. It suffices to prove this for $f \in C_0^\infty(\mathbb{R}^n)$, as this set is dense in $\mathcal{H}_{A, Q}$ by Lemma~\ref{lemma_H_A_Q_basic}. Let $f \in C_0^\infty(\mathbb{R}^n)$ be fixed. Recall that each of the functions $V^{(k)}_\varepsilon$ that appear in the definition of $V_\varepsilon$ in~\eqref{def_V_epsilon} is supported in $\Omega_\varepsilon^{(k)} = \{ x_{\Sigma^{(k)}} + t \nu^{(k)}(x_{\Sigma^{(k)}}): x_{\Sigma^{(k)}} \in \Sigma^{(k)}, t \in (-\varepsilon, \varepsilon) \}$ and denote the map defined in~\eqref{def_iota} for $\Sigma^{(k)}$ by $\iota^{(k)}$ (see also~\eqref{def_iota_k_intro}).  
  Using Proposition~\ref{proposition_tubular_neighbourhood}~(iii) at each of the admissible $C^2$-hypersurfaces $\Sigma^{(k)}$ (with associated Weingarten map $W^{(k)}$) for the first integral and~\eqref{alpha_limit} in the second integral, one gets that
    \begin{equation*} 
    \begin{split}
      \mathfrak{a}[f] &= \sum_{k=1}^N \left[ \int_{\Omega_\varepsilon^{(k)}} V_{\varepsilon}^{(k)}(x) |f(x)|^2 \text{d} x - \int_{\Sigma^{(k)}} \alpha(x_{\Sigma^{(k)}}) |f(x_{\Sigma^{(k)}})|^2 \text{d} \sigma(x_{\Sigma^{(k)}}) \right] \\
      &= \sum_{j=1}^N \bigg[ \int_{\Sigma^{(k)}} \int_{-\varepsilon}^\varepsilon \frac{\beta}{\varepsilon} V^{(k)}\bigg( \iota^{(k)} \bigg( x_{\Sigma^{(k)}}, \frac{\beta}{\varepsilon} t \bigg) \bigg) |f(\iota^{(k)}(x_{\Sigma^{(k)}}, t))|^2 \\
      &\qquad \qquad \qquad \qquad \qquad \qquad\qquad \cdot\det(1 - t W^{(k)}({x_{\Sigma^{(k)}}})) \text{d} t \text{d} \sigma(x_{\Sigma^{(k)}}) \\
      &\qquad \qquad - \int_{\Sigma^{(k)}} \int_{-\beta}^\beta V^{(k)}(\iota^{(k)}(x_{\Sigma^{(k)}}, r)) \text{d} r |f(\iota^{(k)}(x_{\Sigma^{(k)}},0))|^2 \text{d} \sigma(x_{\Sigma^{(k)}}) \bigg] \\
      &= \sum_{j=1}^N \int_{\Sigma^{(k)}} \int_{-\beta}^\beta V^{(k)}\big( \iota^{(k)}( x_{\Sigma^{(k)}}, r) \big) \\
      &\qquad \qquad \qquad \cdot\bigg[ \bigg|f\bigg(\iota^{(k)}\bigg(x_{\Sigma^{(k)}}, \frac{\varepsilon}{\beta} r \bigg)\bigg)\bigg|^2 \det\bigg(1 - \frac{\varepsilon}{\beta} r W^{(k)}({x_{\Sigma^{(k)}}})\bigg) \\
      &\qquad \qquad \qquad \qquad\qquad\qquad\qquad- \big|f(\iota^{(k)}(x_{\Sigma^{(k)}},0))\big|^2 \bigg] \text{d} t \text{d} \sigma(x_{\Sigma^{(k)}}), \\
    \end{split}
  \end{equation*}
  where the substitution $r = \frac{\beta}{\varepsilon} t$ was done in the last step. Since by assumption $V^{(k)} \in L^p(\mathbb{R}^n) + L^\infty(\mathbb{R}^n)$, there exist $V^{(k)}_p \in L^p(\mathbb{R}^n)$ and $V_\infty^{(k)} \in L^\infty(\mathbb{R}^n)$ such that $V^{(k)} = V_p^{(k)} + V_\infty^{(k)}$. As $\textup{supp} \, V^{(k)} \subset \Omega_\beta^{(k)}$, it is no restriction to assume for $* \in \{ p, \infty\}$ that also $\textup{supp} \, V_*^{(k)} \subset \Omega_\beta^{(k)}$. Define for $* \in \{ p, \infty \}$
  \begin{equation*}
    \begin{split}
      \mathcal{I}_{*, 1}^{(k)} := \int_{\Sigma^{(k)}} \int_{-\beta}^\beta V^{(k)}_*\big( \iota^{(k)}(x_{\Sigma^{(k)}}, r) \big) \bigg|f\bigg(\iota^{(k)} \bigg( x_{\Sigma^{(k)}}, \frac{\varepsilon}{\beta} r \bigg)\bigg) \bigg|^2 &\\
      \cdot\bigg(\det\bigg(1 - \frac{\varepsilon}{\beta} r W^{(k)}({x_{\Sigma^{(k)}}})-1 \bigg) \bigg) &\text{d} t \text{d} \sigma(x_{\Sigma^{(k)}})
    \end{split}
  \end{equation*}
  and 
  \begin{equation*}
    \begin{split}
      \mathcal{I}_{*, 2}^{(k)} := \int_{\Sigma^{(k)}} \int_{-\beta}^\beta &V^{(k)}_*\big( \iota^{(k)} (x_{\Sigma^{(k)}}, r) \big) \\
      &\quad\cdot \bigg[ \bigg|f\bigg( \iota^{(k)} \bigg(x_{\Sigma^{(k)}}, \frac{\varepsilon}{\beta} r \bigg)\bigg)\bigg|^2 - \big|f(\iota^{(k)}(x_{\Sigma^{(k)}}, 0)) \big|^2 \bigg] \text{d} t \text{d} \sigma(x_{\Sigma^{(k)}}).
    \end{split}
  \end{equation*}
  Then, one can further rewrite
  \begin{equation} \label{decomposition_a}
    \mathfrak{a}[f] = \sum_{k=1}^N \bigl( \mathcal{I}_{p, 1}^{(k)} + \mathcal{I}_{\infty, 1}^{(k)} + \mathcal{I}_{p, 2}^{(k)} + \mathcal{I}_{\infty, 2}^{(k)} \bigr).
  \end{equation}
  With Proposition~\ref{proposition_tubular_neighbourhood}~(ii),~\eqref{equation_trace_uniformly_bounded} (applied for $\mu=1$), and~\eqref{norm_estimate_delta_form} one gets
  \begin{equation} \label{estimate_I_infty_1}
    \begin{split}
      \big| \mathcal{I}_{\infty, 1}^{(k)} \big| &\leq C \varepsilon \| V^{(k)}_\infty \|_{L^\infty(\mathbb{R}^n)} \int_{-\beta}^\beta  \int_{\Sigma^{(k)}} \bigg|f\bigg(\iota^{(k)} \bigg(x_{\Sigma^{(k)}}, \frac{\varepsilon}{\beta} r \bigg)\bigg) \bigg|^2 \text{d} \sigma(x_{\Sigma^{(k)}}) \text{d} t \\
      &\leq C \varepsilon \| f \|_{\mathcal{H}_{A, Q}}^2 \\
      &\leq \varepsilon C \bigl( \textup{Re}\, \mathfrak{h}_{A, Q, \alpha}[f] + \| f \|_{L^2(\mathbb{R}^n)}^2 \bigr).
    \end{split}
  \end{equation}
  Likewise, H\"older's inequality (with $p = \frac{n-1}{1-\gamma}$ and $q = (1-\frac{1}{p})^{-1} = \frac{n-1}{n-2+\gamma}$), Proposition~\ref{proposition_tubular_neighbourhood}~(iv),~\eqref{equation_trace_uniformly_bounded} for $\mu = \gamma$, and~\eqref{norm_estimate_delta_form} imply
  \begin{equation} \label{estimate_I_p_1}
    \begin{split}
      \big| \mathcal{I}_{p, 1}^{(k)} \big| &\leq C \varepsilon \bigg(  \int_{\Sigma^{(k)}} \int_{-\beta}^\beta \big| V^{(k)}_p\big( \iota^{(k)}(x_{\Sigma^{(k)}}, r) \big) \big|^p \text{d} t \text{d} \sigma(x_{\Sigma^{(k)}}) \bigg)^{1/p} \\
      &\quad \qquad  \cdot \bigg( \int_{-\beta}^\beta  \int_{\Sigma^{(k)}} \bigg|f\bigg(\iota^{(k)} \bigg( x_{\Sigma^{(k)}}, \frac{\varepsilon}{\beta} r \bigg)\bigg) \bigg|^{2q} \text{d} \sigma(x_{\Sigma^{(k)}}) \text{d} t \bigg)^{1/q} \\
      &\leq C \varepsilon \| V_p^{(k)} \|_{L^p(\mathbb{R}^n)} \| f \|_{\mathcal{H}_{A, Q}}^2 \\
      &\leq \varepsilon C \bigl( \textup{Re}\, \mathfrak{h}_{A, Q, \alpha}[f] +  \| f \|_{L^2(\mathbb{R}^n)}^2 \bigr).
    \end{split}
  \end{equation}
  Next,  the Cauchy-Schwarz inequality, Fubini's theorem, \eqref{equation_trace_difference}, \eqref{equation_trace_uniformly_bounded} (both applied for $\mu=1$) and~\eqref{norm_estimate_delta_form} yield for $\gamma' \in (0,1)$
  \begin{equation} \label{estimate_I_infty_2}
    \begin{split}
      \big| \mathcal{I}_{\infty, 2}^{(k)} \big| &= \bigg| \int_{\Sigma^{(k)}} \int_{-\beta}^\beta V^{(k)}_\infty\big( \iota^{(k)} (x_{\Sigma^{(k)}}, r) \big) 
      \bigg[ \bigg|f\bigg( \iota^{(k)} \bigg(x_{\Sigma^{(k)}}, \frac{\varepsilon}{\beta} r \bigg)\bigg)\bigg| - \big|f(\iota^{(k)}(x_{\Sigma^{(k)}}, 0)) \big| \bigg] \\
      &\qquad\qquad \qquad \qquad \cdot \bigg[ \bigg|f\bigg( \iota^{(k)} \bigg(x_{\Sigma^{(k)}}, \frac{\varepsilon}{\beta} r \bigg)\bigg)\bigg| + \big|f(\iota^{(k)}(x_{\Sigma^{(k)}}, 0)) \big| \bigg]\text{d} t \text{d} \sigma(x_{\Sigma^{(k)}}) \bigg| \\
      &\leq \| V^{(k)}_\infty \|_{L^\infty(\mathbb{R}^n)} \\
      &\quad \cdot \bigg(\int_{-\beta}^\beta \int_{\Sigma^{(k)}} \bigg| \bigg|f\bigg(\iota^{(k)} \bigg(x_{\Sigma^{(k)}}, \frac{\varepsilon}{\beta} r \bigg)\bigg)\bigg| - \big|f(\iota^{(k)}(x_{\Sigma^{(k)}},0))\big| \bigg|^2  \text{d} \sigma(x_{\Sigma^{(k)}}) \text{d} t \bigg)^{1/2} \\
      &\quad \cdot \bigg(  \int_{-\beta}^\beta \int_{\Sigma^{(k)}} \bigg| \bigg|f\bigg(\iota^{(k)} \bigg(x_{\Sigma^{(k)}}, \frac{\varepsilon}{\beta} r \bigg)\bigg)\bigg| + \big|f(\iota^{(k)}(x_{\Sigma^{(k)}},0))\big| \bigg|^2  \text{d} \sigma(x_{\Sigma^{(k)}}) \text{d} t \bigg)^{1/2} \\
      &\leq C \varepsilon^{\gamma'/2} \| f \|_{\mathcal{H}_{A, Q}}^2 \\
      &\leq C \varepsilon^{\gamma'/2}  \bigl( \textup{Re}\, \mathfrak{h}_{A, Q, \alpha}[f] + \| f \|_{L^2(\mathbb{R}^n)}^2 \bigr).
    \end{split}
  \end{equation}
  Finally, using H\"older's inequality for three functions with $p = \frac{n-1}{1-\gamma}$, $q_1 = 2 \frac{n-1}{n-2+\gamma_1}$ and $q_2 = 2 \frac{n-1}{n-2+\gamma_2}$, where $\gamma_1 \in (0, 2 \gamma] \cap (0,1)$ and $\gamma_2 = 2 \gamma - \gamma_1$, so that $\frac{1}{p} + \frac{1}{q_1} + \frac{1}{q_2} = 1$, Fubini's theorem, Proposition~\ref{proposition_tubular_neighbourhood}~(ii)\&(iii),~\eqref{equation_trace_difference},~\eqref{equation_trace_uniformly_bounded}, and~\eqref{norm_estimate_delta_form} imply
  \begin{equation} \label{estimate_I_p_2}
    \begin{split}
      \big| \mathcal{I}_{p, 2}^{(k)} \big| &\leq \bigg(  \int_{\Sigma^{(k)}} \int_{-\beta}^\beta \big| V^{(k)}_p\big( \iota^{(k)}(x_{\Sigma^{(k)}}, r) \big) \big|^p \text{d} t \text{d} \sigma(x_{\Sigma^{(k)}}) \bigg)^{1/p} \\
      &\quad   \cdot \bigg( \int_{-\beta}^\beta  \int_{\Sigma^{(k)}} \bigg| \bigg|f\bigg(\iota^{(k)} \bigg( x_{\Sigma^{(k)}}, \frac{\varepsilon}{\beta} r \bigg)\bigg) - \big| f(\iota^{(k)}(x_\Sigma^{(k)}, 0)) \big| \bigg|^{q_1} \text{d} \sigma(x_{\Sigma^{(k)}}) \text{d} t \bigg)^{1/q_1} \\
      &\quad   \cdot \bigg( \int_{-\beta}^\beta  \int_{\Sigma^{(k)}} \bigg| \bigg|f\bigg(\iota^{(k)} \bigg( x_{\Sigma^{(k)}}, \frac{\varepsilon}{\beta} r \bigg)\bigg) + \big| f(\iota^{(k)}(x_\Sigma^{(k)}, 0)) \big| \bigg|^{q_2} \text{d} \sigma(x_{\Sigma^{(k)}}) \text{d} t \bigg)^{1/q_2} \\
      &\leq C \varepsilon^{\gamma_1/2} \| V_p^{(k)} \|_{L^p(\mathbb{R}^n)} \| f \|_{\mathcal{H}_{A, Q}}^2 \\
      &\leq C \varepsilon^{\gamma_1/2} \bigl( \textup{Re}\, \mathfrak{h}_{A, Q, \alpha}[f] + \| f \|_{L^2(\mathbb{R}^n)}^2 \bigr).
    \end{split}
  \end{equation}
  By combining the estimates~\eqref{estimate_I_infty_1}--\eqref{estimate_I_p_2} in~\eqref{decomposition_a}, one finds that~\eqref{estimate_difference} is true.
\end{proof}

\section{Consequences for the spectrum of $H_{A,Q,V_\varepsilon}$} \label{section_spectral_implications}

Since convergence in the norm resolvent sense implies the convergence of the spectra of the associated operators, Theorem~\ref{theorem_approximation} allows to transfer results on the spectrum of $H_{A,Q,\alpha}$ to the spectrum of $H_{A,Q,V_\varepsilon}$. If $\Sigma$ is one $C^2$-hypersurface, some interesting consequences are elaborated in \cite[Section~3.3]{BEHL17}; in particular, in \cite[Proposition~3.5]{BEHL17} results on the existence of eigenvalues of $H_{0,0,\alpha}$ (for $A=Q=0$) in various situations are transferred to $H_{0,0,V_\varepsilon}$. In this section, some further consequences are discussed, which can not be obtained from the results in \cite{BEHL17} due to the different assumptions there. Here, the focus lies in self-adjoint examples; however, since convergence in the norm resolvent sense also implies the convergence of the spectrum in the non self-adjoint setting, see for instance \cite[Section IV.3.5]{K95}, also results on the spectrum of $H_{A,Q,\alpha}$ for non-real $\alpha$ from, e.g., \cite{BLLR18, F17, KK17} could be transferred.

The starting point in all of the examples below is the fact that, as a consequence of Theorem~\ref{theorem_approximation}, the discrete and the essential spectra of $H_{A,Q,V_\varepsilon}$ converge to the respective spectra of $H_{A,Q,\alpha}$. The claim in the following lemma follows from known results on the spectral convergence for families of operators that converge in the norm resolvent sense, see \cite[Section IV.3.5]{K95} or \cite[Satz~9.24]{W00} or also the proof of \cite[Proposition~3.5]{BEHL17} for a more detailed argument.

\begin{lem} \label{lemma_convergence_spectrum}
  Let $A$ and $Q = Q_1 +Q_2$ be as in~\eqref{assumption_A} and~\eqref{assumption_Q}, respectively, $\Sigma$ be as in~\eqref{Sigma_intro}, and  $V^{(k)} \in L^p(\mathbb{R}^n) + L^\infty(\mathbb{R}^n)$ with $p = \frac{n-1}{1-\gamma}$ and $\gamma \in (0,1)$ be supported in $\Omega_{\beta}^{(k)}$. Assume additionally that $Q$ and $V^{(k)}$ are real-valued. Let for $\varepsilon \in (0, \varepsilon_0)$ with $\varepsilon_0$ as in Theorem~\ref{theorem_approximation} the potential $V_\varepsilon$ be defined by~\eqref{def_V_epsilon} and $H_{A,Q,V_\varepsilon}$ be the self-adjoint operator associated with the form $\mathfrak{h}_{A,Q,V_{\varepsilon}}$ in~\eqref{def_H_V_eps_form_intro}. Eventually, let $\alpha$ be as in~\eqref{alpha_limit} and $H_{A,Q,\alpha}$ be the self-adjoint operator associated with the form $\mathfrak{h}_{A,Q,\alpha}$ in~\eqref{def_H_alpha_form_intro}. Then, the following is true:
  \begin{itemize}
    \item[\textup{(i)}] $\lambda \in \sigma_{\textup{disc}}(H_{A,Q,\alpha})$ if and only of there exists a family of numbers $(\lambda_\varepsilon)_{\varepsilon \in (0, \varepsilon_0)}$ with $\lambda_\varepsilon \in \sigma_{\textup{disc}}(H_{A,Q,V_\varepsilon})$ such that $\lambda_\varepsilon \rightarrow \lambda$ for $\varepsilon \searrow 0$.
    \item[\textup{(ii)}] $\lambda \in \sigma_{\textup{ess}}(H_{A,Q,\alpha})$ if and only of there exists a family of numbers $(\lambda_\varepsilon)_{\varepsilon \in (0, \varepsilon_0)}$ with $\lambda_\varepsilon \in \sigma_{\textup{ess}}(H_{A,Q,V_\varepsilon})$ such that $\lambda_\varepsilon \rightarrow \lambda$ for $\varepsilon \searrow 0$.
  \end{itemize}
\end{lem}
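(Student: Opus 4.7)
The plan is to combine Theorem~\ref{theorem_approximation} with the abstract spectral convergence theory for self-adjoint operators from \cite[Section~IV.3.5]{K95} and \cite[Satz~9.24]{W00}. First I would observe that under the additional real-valuedness of $Q$ and each $V^{(k)}$, the function $\alpha$ determined by~\eqref{alpha_limit} is real-valued as well, so the forms $\mathfrak{h}_{A,Q,\alpha}$ and $\mathfrak{h}_{A,Q,V_\varepsilon}$ are symmetric. Combined with Proposition~\ref{proposition_delta_form} and Remark~\ref{remark_h_V_eps_closed}, this makes the associated $m$-sectorial operators $H_{A,Q,\alpha}$ and $H_{A,Q,V_\varepsilon}$ self-adjoint and semibounded from below, and then Theorem~\ref{theorem_approximation} yields norm resolvent convergence $H_{A,Q,V_\varepsilon} \to H_{A,Q,\alpha}$ as $\varepsilon \to 0$.

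For the ``only if'' direction of (i), the main tool is the Riesz projection. Given $\lambda \in \sigma_{\textup{disc}}(H_{A,Q,\alpha})$ I would choose a small positively oriented circle $\Gamma \subset \rho(H_{A,Q,\alpha})$ enclosing $\lambda$ and no other point of $\sigma(H_{A,Q,\alpha})$. Norm resolvent convergence then forces $\Gamma \subset \rho(H_{A,Q,V_\varepsilon})$ for all sufficiently small $\varepsilon$ and yields norm convergence
\[
  P_\varepsilon := -\frac{1}{2 \pi i} \oint_\Gamma (H_{A,Q,V_\varepsilon} - \zeta)^{-1} \, \textup{d}\zeta \longrightarrow P,
\]
where $P$ is the spectral projection of $H_{A,Q,\alpha}$ onto the eigenspace at $\lambda$. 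Since $\dim \ran P < \infty$, the rank stability of projections under norm convergence gives $\dim \ran P_\varepsilon = \dim \ran P$ for $\varepsilon$ small, so the spectrum of $H_{A,Q,V_\varepsilon}$ inside $\Gamma$ consists of finitely many eigenvalues of finite multiplicity, from which the desired family $\lambda_\varepsilon \in \sigma_{\textup{disc}}(H_{A,Q,V_\varepsilon})$ with $\lambda_\varepsilon \to \lambda$ can be extracted. For the ``if'' direction, norm resolvent convergence gives $\lambda \in \sigma(H_{A,Q,\alpha})$ directly; if $\lambda$ were to lie in $\sigma_{\textup{ess}}(H_{A,Q,\alpha})$, then the spectral subspace of $H_{A,Q,\alpha}$ associated with any open interval around $\lambda$ would be infinite-dimensional, and the same Riesz projection argument applied to a contour around such an interval would force $\dim \ran P_\varepsilon$ to be arbitrarily large for $\varepsilon$ close to $0$, contradicting the finite multiplicity of the eigenvalues $\lambda_\varepsilon$.

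Item (ii) is then obtained by contraposition from (i), using the disjoint decomposition $\sigma = \sigma_{\textup{disc}} \sqcup \sigma_{\textup{ess}}$ together with the basic fact that points in $\rho(H_{A,Q,\alpha})$ eventually lie in $\rho(H_{A,Q,V_\varepsilon})$ under norm resolvent convergence. The main obstacle I anticipate is the careful formulation of the two reverse implications; since the argument is essentially the one carried out in the proof of \cite[Proposition~3.5]{BEHL17}, I would follow that template rather than redo the bookkeeping from scratch.
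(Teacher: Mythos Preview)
Your proposal is correct and aligns with the paper's approach: the paper does not give a self-contained proof of this lemma but simply invokes the standard spectral convergence theory for norm resolvent convergent families of self-adjoint operators, citing \cite[Section~IV.3.5]{K95}, \cite[Satz~9.24]{W00}, and the proof of \cite[Proposition~3.5]{BEHL17} for the detailed bookkeeping --- exactly the references and the Riesz-projection template you propose to use.
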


In the following examples the potentials $V_\varepsilon$ are constructed via piecewise constant functions $V^{(k)}$. So let us fix some notations that are the same in Propositions~\ref{proposition_magnetic}--\ref{proposition_cusp}. Let $\Sigma = \bigcup_{k=1}^N \Sigma^{(k)}$ be as in~\eqref{Sigma_intro}, choose for each $k \in \{ 1, \dots, N\}$ a number $\beta^{(k)}$ as in Proposition~\ref{proposition_tubular_neighbourhood}, and set $\beta := \min \{ \beta^{(1)}, \dots, \beta^{(N)} \}$.  Denote for $k \in \{ 1, \dots, N \}$ the $\beta$-neighborhood of $\Sigma^{(k)}$ by $\Sigma_\beta^{(k)}$, i.e. $\Sigma_\beta^{(k)} = \iota^{(k)}(\Sigma^{(k)} \times (-\beta, \beta))$, where $\iota^{(k)}$ is the map from~\eqref{def_iota} for $\Sigma^{(k)}$, and define for $\alpha < 0$ the functions $V^{(k)} = \frac{1}{2 \beta} \alpha \chi_{\Sigma_\beta^{(k)}}$, where $\chi_{\Sigma_\beta^{(k)}}$ denotes the characteristic function for $\Sigma_\beta^{(k)}$. Moreover, introduce for $\varepsilon \in (0, \beta]$ the scaled potentials $V_\varepsilon$ as in~\eqref{def_V_epsilon}, and the associated Schr\"odinger operators $H_{A,0,V_\varepsilon}$ (for $Q=0$) via the associated quadratic forms in~\eqref{def_H_V_eps_form_intro}. Likewise, the Schr\"odinger operator with a $\delta$-potential of constant strength $\alpha$ is denoted by $H_{A,0,\alpha}$ and it is associated with the form in~\eqref{def_H_alpha_form_intro}.

First, a result from \cite{ELPO18} is used and conditions that ensure that the discrete spectrum of a magnetic Schr\"odinger operator with a homogeneous magnetic field $A(x,y) = \frac{1}{2}(-y,x)$ is non-empty are discussed, if the electric potential is supported in a small neighborhood of a wedge of opening angle $\phi \in (0, \pi)$, i.e.
\begin{equation*} 
  \Sigma = \{ (r \cos \theta, r \sin \theta): r > 0, \theta \in \{ 0, \phi\} \}.
\end{equation*}
Note that this $\Sigma$ falls into the class of networks considered in this paper, as it is the union of two rays $\Sigma^{(1)}$ and $\Sigma^{(2)}$ with joint endpoint at the origin. The associated potential $V_\varepsilon$ is given by
\begin{equation*}
  V_\varepsilon = \begin{cases} 2 \alpha \varepsilon^{-1} & \text{ in }\Omega_\varepsilon^{(1)} \cap \Omega_\varepsilon^{(2)} \\ \alpha \varepsilon^{-1}  & \text{ in }(\Omega_\varepsilon^{(1)} \cup \Omega_\varepsilon^{(2)})  \setminus (\Omega_\varepsilon^{(1)} \cap \Omega_\varepsilon^{(2)}), \\ 0 & \text{ else;} \end{cases}
\end{equation*}
note that there exists a constant $c = c(\phi)$ such that $\Omega_\varepsilon^{(1)} \cap \Omega_\varepsilon^{(2)} \subset B(0, c \varepsilon)$.

\begin{prop} \label{proposition_magnetic}
  Let $\alpha < 0$. In the situation described above, set 
  \begin{equation*}
    F_{\phi, \alpha}(x,y) := 1 + \frac{x^4}{4} - x^2 \Theta_{\delta, \alpha} + \frac{\alpha}{\sqrt{\pi}} x e^{-y^2\tan^2(\phi/2)}(1+\textup{erf}(y)),
  \end{equation*}
  where $\Theta_{\delta, \alpha} = \inf \sigma_\textup{ess}(H_{A, 0, \alpha}) > -\infty$ and $\textup{erf}$ is the error function. If $\varepsilon > 0$ is sufficiently small and $\inf_{x,y \in (0, \infty)} F_{\phi, \alpha}(x,y)<0$, then 
  \begin{equation*}
    \sigma_\textup{disc}(H_{A, 0, V_\varepsilon}) \cap (-\infty, \inf \sigma_\textup{ess}(H_{A, 0, V_\varepsilon})) \neq \emptyset.
  \end{equation*}
  In particular, this is the case, if one of the following conditions hold:
  \begin{itemize}
    \item[\textup{(a)}] $\phi \in (0, \frac{1}{3} \pi]$ and $|\alpha|$ is sufficiently small.
    \item[\textup{(b)}] $\phi \in (0, \frac{1}{8} \pi]$ and $|\alpha|$ is sufficiently large.
  \end{itemize}
\end{prop}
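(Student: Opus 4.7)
The plan is to combine the variational bound-state criterion of \cite{ELPO18} for the singular operator $H_{A,0,\alpha}$ with the norm resolvent convergence from Theorem~\ref{theorem_approximation} and the spectral convergence in Lemma~\ref{lemma_convergence_spectrum}. The wedge $\Sigma$ is the union of two rays $\Sigma^{(1)}, \Sigma^{(2)}$ meeting at the origin, each of which is an admissible $C^2$-hypersurface in the sense of Definition~\ref{definition_hypersurface}, so the setup of~\eqref{Sigma_intro} is fulfilled and the whole approximation machinery of the paper applies to the pair $(H_{A,0,\alpha}, H_{A,0,V_\varepsilon})$.

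The relevant content of \cite{ELPO18} is a variational criterion of the following shape: there is a two-parameter family of trial functions whose normalized energy relative to the essential spectrum threshold $\Theta_{\delta,\phi}$ is exactly $F_{\phi,\alpha}(x,y)$, so that $\inf_{x,y>0} F_{\phi,\alpha}(x,y) < 0$ implies $\sigma_{\textup{disc}}(H_{A,0,\alpha}) \cap (-\infty, \Theta_{\delta,\phi}) \neq \emptyset$. Fix such an eigenvalue $\lambda < \Theta_{\delta,\phi}$. Lemma~\ref{lemma_convergence_spectrum}(i) yields a family $\lambda_\varepsilon \in \sigma_{\textup{disc}}(H_{A,0,V_\varepsilon})$ with $\lambda_\varepsilon \to \lambda$. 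On the other hand, the ``only if'' direction of Lemma~\ref{lemma_convergence_spectrum}(ii) forbids any downward migration of the essential spectrum: if $\mu_\varepsilon \in \sigma_{\textup{ess}}(H_{A,0,V_\varepsilon})$ and $\mu_\varepsilon \to \mu$, then $\mu \in \sigma_{\textup{ess}}(H_{A,0,\alpha})$, hence $\mu \geq \Theta_{\delta,\phi}$. This gives $\liminf_{\varepsilon\to 0}\inf\sigma_{\textup{ess}}(H_{A,0,V_\varepsilon}) \geq \Theta_{\delta,\phi} > \lambda$, and so $\lambda_\varepsilon < \inf \sigma_{\textup{ess}}(H_{A,0,V_\varepsilon})$ for all sufficiently small $\varepsilon$, which proves the first conclusion.

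It then suffices to verify that the conditions in (a) and (b) imply $\inf F_{\phi,\alpha} < 0$. For (a), with $\phi \in (0, \tfrac{1}{3}\pi]$ and small $|\alpha|$, the strategy is to locate a test point $(x_0, y_0)$ at which the polynomial part $1 + x_0^4/4 - x_0^2 \Theta_{\delta,\phi}$ is close to zero, using the asymptotics of $\Theta_{\delta,\phi}$ as a function of $\phi$ available from \cite{ELPO18} in this angular range; since the remaining term is of order $-|\alpha|$, this pushes $F_{\phi,\alpha}$ below zero as soon as $|\alpha|>0$ is sufficiently small to keep the test point in the correct regime. For (b), with $\phi \in (0, \tfrac{1}{8}\pi]$ and $|\alpha|$ large, the key observation is that $\tan(\phi/2)$ is small, so $e^{-y^2\tan^2(\phi/2)}$ remains close to $1$ up to the large scale $\cot(\phi/2)$; choosing $y$ large enough that $1+\textup{erf}(y) \approx 2$ and then optimizing in $x$ makes the $\alpha$-contribution of order $-|\alpha|^{4/3}$, which for $|\alpha|$ sufficiently large dominates the bounded polynomial remainder $1 + x^4/4 - x^2\Theta_{\delta,\phi}$.

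The hardest part is the quantitative analysis in step three, which depends on sharp information about $\Theta_{\delta,\phi}$ as a function of $\phi$, in particular on its distance to the Landau-level threshold of the pure magnetic Laplacian and on how this distance scales in the two angular ranges; this is precisely where the specific thresholds $\tfrac{1}{3}\pi$ and $\tfrac{1}{8}\pi$ enter. The operator-theoretic transfer is by contrast a direct composition of Theorem~\ref{theorem_approximation} with the spectral convergence encoded in Lemma~\ref{lemma_convergence_spectrum}, and requires no additional work beyond checking that the wedge $\Sigma$ is a finite network of admissible $C^2$-hypersurfaces.
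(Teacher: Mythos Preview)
Your proposal is correct and follows the same overall line as the paper: cite \cite{ELPO18} for the existence of a discrete eigenvalue $\lambda_0 < \Theta_{\delta,\phi}$ of $H_{A,0,\alpha}$ under the condition $\inf F_{\phi,\alpha} < 0$, then use Lemma~\ref{lemma_convergence_spectrum} to transfer this to $H_{A,0,V_\varepsilon}$. Your handling of the essential spectrum via the ``only if'' direction of Lemma~\ref{lemma_convergence_spectrum}(ii) to get $\liminf_{\varepsilon \to 0} \inf \sigma_{\textup{ess}}(H_{A,0,V_\varepsilon}) \geq \Theta_{\delta,\phi}$ is in fact slightly more economical than the paper's phrasing, which asserts full convergence $\inf \sigma_{\textup{ess}}(H_{A,0,V_\varepsilon}) \to \Theta_{\delta,\phi}$.

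The one point of divergence is your treatment of (a) and (b). The paper does not attempt any analysis of $F_{\phi,\alpha}$ at all; it simply cites \cite[Theorem~3 and Corollary~2]{ELPO18}, where both the variational criterion and the verification that (a) and (b) force $\inf F_{\phi,\alpha} < 0$ are already carried out. Your third and fourth paragraphs sketch a direct attack on (a) and (b), but this is unnecessary, and the sketch itself is heuristic rather than complete (in particular, the dependence of $\Theta_{\delta,\phi}$ on $\alpha$ and the precise role of the angular thresholds $\tfrac{1}{3}\pi$ and $\tfrac{1}{8}\pi$ would require the detailed computations of \cite{ELPO18} anyway). You should replace that material with a direct citation.
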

\begin{proof}
  First, taking the given assumptions into account, there exists a point $\lambda_0 \in \sigma_\textup{disc}(H_{A, 0, \alpha}) \cap (-\infty,\Theta_{\delta, \alpha})$, see \cite[Theorem~3 and Corollary~2]{ELPO18} (note that $\beta$ in \cite{ELPO18} is equal to $-\alpha$). Since by Lemma~\ref{lemma_convergence_spectrum} one has $\inf \sigma_\textup{ess}(H_{A, 0, V_\varepsilon}) \rightarrow \Theta_{\delta, \alpha}$, as $\varepsilon \searrow 0$, and there exists $\lambda_\varepsilon \in \sigma_{\textup{disc}}(H_{A,0,V_\varepsilon})$  such that $\lambda_\varepsilon \rightarrow \lambda_0$, as $\varepsilon \searrow 0$, one concludes that $\lambda_\varepsilon < \inf \sigma_\textup{ess}(H_{A, 0, V_\varepsilon})$.
\end{proof}

In the following proposition it is shown how, in a similar vein as in \cite[Corollary~1.3]{BFHSL24}, a geometric optimization result on $H_{0,0,\alpha}$ with a $\delta$-interaction supported on a star graph in $\mathbb{R}^2$ from \cite{EL18} can be transferred to $H_{0,0,V_\varepsilon}$. Note that in a similar way as in Proposition~\ref{proposition_star_graph} the optimization results for Laplacians with $\delta$-potentials from \cite{BLS23, E05, L19} can be transferred to Schr\"odinger operators with regular potentials. In particular, in these references also situations in other space dimensions and for more general real-valued $\alpha \in L^p(\Sigma) + L^\infty(\Sigma)$ with $p>1$ for $n=2$ and $p \geq n-1$ for $n \geq 3$ are contained. 

To formulate the mentioned optimization result for Schr\"odinger operators with potentials supported in a tubular neighborhood of a star graph, let $N \in \mathbb{N}$, $N>2$, be fixed and $\phi_1, \dots \phi_N \in (0, 2 \pi)$ such that $\phi_1 + \dots + \phi_N = 2\pi$, and set $\widetilde{\phi}_k := \sum_{j=1}^k \phi_j$, $k \in\{ 1, \dots, N\}$. Let $\Sigma \subset \mathbb{R}^2$ be a star graph, where each of the edges has length $L \in (0, \infty)$, i.e.
\begin{equation*}
  \Sigma = \{ (r \cos \theta, r \sin \theta): r \in (0, L), \theta \in \{ \widetilde{\phi}_1, \dots, \widetilde{\phi}_N \} \}.
\end{equation*}
Moreover, denote by $\Gamma$ the symmetric star graph with $N$ edges of length $L$, i.e. $\Gamma$ is as $\Sigma$ with $\phi_1 = \dots = \phi_N = \frac{2 \pi}{N}$. Note that both graphs $\Sigma$ and $\Gamma$ are networks as in~\eqref{Sigma_intro}, as they are unions of $N$ line segments $\Sigma^{(1)}, \dots, \Sigma^{(N)}$ and $\Gamma^{(1)}, \dots, \Gamma^{(N)}$, respectively. Let $V_\varepsilon$ be as described after Lemma~\ref{lemma_convergence_spectrum} and let $W_\varepsilon$ be constructed in the same way, but with the network $\Sigma$ being replaced by $\Gamma$ with the same $\alpha$ and the same $\beta$ as in the definition of $V_\varepsilon$, and denote the associated Schr\"odinger operator by  $H_{0,0,W_\varepsilon}$. Note that, as in the previous result, $\max |V_\varepsilon| \leq N |\alpha| \varepsilon^{-1}$ and, with a constant $c = c(\phi_1, \dots, \phi_N)$, the area in which $|V_\varepsilon| \geq |\alpha| \varepsilon^{-1}$, is contained in $B(0, c \varepsilon)$; a similar statement holds for $W_\varepsilon$. 
As $\Sigma$ and $\Gamma$ are compact, $V_\varepsilon$ and $W_\varepsilon$ are bounded and compactly supported and thus $\sigma_\textup{ess}(H_{0,0,V_\varepsilon})=\sigma_\textup{ess}(H_{0,0,W_\varepsilon})=[0, \infty)$. One has the following optimization result for the lowest eigenvalue of $H_{0,0,V_\varepsilon}$.

\begin{prop} \label{proposition_star_graph}
  Let $\alpha < 0$. In the situation described above, for all $\varepsilon>0$ sufficiently small $H_{0, 0, V_\varepsilon}$ and $H_{0,0,W_\varepsilon}$ have negative  eigenvalues $\lambda_\varepsilon(\Sigma) := \inf \sigma_{\textup{disc}}(H_{0,0, V_\varepsilon})$ and $\lambda_\varepsilon(\Gamma) := \inf \sigma_{\textup{disc}}(H_{0,0, W_\varepsilon})$,
  \begin{equation} \label{star_graph_eigenvalues}
    \lambda_\varepsilon(\Sigma) \leq \lambda_\varepsilon(\Gamma),
  \end{equation}
  and equality is achieved if and only if $\Sigma$ and $\Gamma$ are congruent to each other.
\end{prop}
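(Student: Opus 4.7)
My plan is to transfer the known geometric optimization result for the Laplacian with constant-strength $\delta$-interaction on star graphs, established in~\cite{EL18}, to the regularized operators via the norm resolvent convergence from Theorem~\ref{theorem_approximation} and the spectral stability provided by Lemma~\ref{lemma_convergence_spectrum}.

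First I would identify the singular limits. The construction $V^{(k)} = \frac{1}{2\beta}\alpha \chi_{\Sigma_\beta^{(k)}}$ makes formula~\eqref{alpha_limit} in Theorem~\ref{theorem_approximation} produce $\alpha^{(k)} \equiv \alpha$ on each edge, so Theorem~\ref{theorem_approximation} (or, equivalently, Corollary~\ref{corollary_inverse_result}) yields norm resolvent convergence of $H_{0,0,V_\varepsilon}$ to the self-adjoint operator $H_{0,0,\alpha}^\Sigma$ associated with $-\Delta$ plus a $\delta$-interaction of constant strength $\alpha<0$ on $\Sigma$, and analogously $H_{0,0,W_\varepsilon} \to H_{0,0,\alpha}^\Gamma$. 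The main result of \cite{EL18} then tells us that both limit operators have a strictly negative lowest discrete eigenvalue $\lambda(\Sigma)$, $\lambda(\Gamma)$ below the essential spectrum $[0,\infty)$, and that $\lambda(\Sigma) \leq \lambda(\Gamma)$ with equality if and only if $\Sigma$ and $\Gamma$ are congruent.

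Next, since $\Sigma$ and $\Gamma$ are compact, $V_\varepsilon$ and $W_\varepsilon$ are bounded and compactly supported, so $\sigma_\textup{ess}(H_{0,0,V_\varepsilon}) = \sigma_\textup{ess}(H_{0,0,W_\varepsilon}) = [0,\infty)$ for every admissible $\varepsilon$. By Lemma~\ref{lemma_convergence_spectrum}(i) combined with the standard fact that for self-adjoint operators norm resolvent convergence preserves isolated eigenvalues of finite multiplicity below the essential spectrum (see \cite{W00} or \cite[Section IV.3.5]{K95}), the infimum $\inf\sigma_\textup{disc}(H_{0,0,V_\varepsilon})$ is actually attained and tends to $\lambda(\Sigma)$ as $\varepsilon\searrow 0$, and the same holds on $\Gamma$; in particular, for $\varepsilon$ sufficiently small both $\lambda_\varepsilon(\Sigma)$ and $\lambda_\varepsilon(\Gamma)$ exist and are negative. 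In the non-congruent case the strict inequality $\lambda(\Sigma) < \lambda(\Gamma)$ from \cite{EL18} then passes to $\lambda_\varepsilon(\Sigma) < \lambda_\varepsilon(\Gamma)$ for all sufficiently small $\varepsilon$ by convergence. In the congruent case, any congruence mapping $\Sigma$ to $\Gamma$ extends to a rigid motion of $\mathbb{R}^2$ which by the rotationally symmetric construction of $V_\varepsilon$ and $W_\varepsilon$ transports one potential to the other; the induced unitary on $L^2(\mathbb{R}^2)$ therefore conjugates $H_{0,0,V_\varepsilon}$ to $H_{0,0,W_\varepsilon}$, so they are unitarily equivalent and $\lambda_\varepsilon(\Sigma) = \lambda_\varepsilon(\Gamma)$.

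The step that requires the most care is the comparison in the non-congruent case: Lemma~\ref{lemma_convergence_spectrum}(i) by itself only guarantees convergence of \emph{some} discrete eigenvalue of $H_{0,0,V_\varepsilon}$ to $\lambda(\Sigma)$, not that this eigenvalue is the bottom of $\sigma_\textup{disc}(H_{0,0,V_\varepsilon})$ as required by the definition $\lambda_\varepsilon(\Sigma) := \inf \sigma_\textup{disc}(H_{0,0, V_\varepsilon})$ in the statement. I expect this to be the main technical obstacle; it is resolved by the stronger eigenvalue-stability statement for isolated spectrum under self-adjoint norm resolvent convergence, which ensures that the total multiplicity of the spectrum in $(-\infty,0)$ is preserved and that no eigenvalue of $H_{0,0,V_\varepsilon}$ can escape to $-\infty$ or accumulate from below.
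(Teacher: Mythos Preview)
Your proposal is correct and follows essentially the same route as the paper: invoke \cite[Theorem~1.1]{EL18} for the singular operators and then transfer the result via Lemma~\ref{lemma_convergence_spectrum}. Your treatment is in fact more explicit than the paper's short proof, in particular your unitary-equivalence argument for the ``if'' direction of the equality characterization and your discussion of why the converging eigenvalue is indeed the bottom of the discrete spectrum.
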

\begin{proof}
  First, by \cite[Theorem~1.1]{EL18} the Schr\"odinger operators $H_{0,0,\alpha}(\Sigma)$ and $H_{0,0,\alpha}(\Gamma)$ with $\delta$-potentials supported on $\Sigma$ and $\Gamma$, respectively, have negative discrete eigenvalues and for their smallest eigenvalues $\lambda_0(\Sigma)$ and $\lambda_0(\Gamma)$ the inequality
  \begin{equation*}
    \lambda_0(\Sigma) \leq \lambda_0(\Gamma) 
  \end{equation*}
  holds, where equality is achieved if and only if $\Sigma$ and $\Gamma$ are congruent to each other. By Lemma~\ref{lemma_convergence_spectrum} one has for $\varepsilon>0$ sufficiently small that $H_{0, 0, V_\varepsilon}$ and $H_{0,0,W_\varepsilon}$ admit negative eigenvalues, that~\eqref{star_graph_eigenvalues} is true, and that equality holds if and only if $\Sigma$ and $\Gamma$ are congruent to each other.
\end{proof}

Finally, it is shown how corners in the network $\Sigma \subset \mathbb{R}^2$ can create geometrically induced eigenvalues also for operators with potentials that are supported in a tubular neighborhood of $\Sigma$. This will be done in the situation from \cite{FP20}, when $\Sigma$ is the boundary of a compact domain with a cusp; a similar result for piecewise $C^4$-smooth Lipschitz curves from \cite[Theorem~11]{BCP25} can be handled in the same way.
Assume that $\Sigma$ is a compact and closed curve that is $C^4$-smooth except at the origin and that there exists $\varepsilon_2 \in (0,1)$ and $d>1$ such that 
\begin{equation*}
  \Sigma \cap (-\varepsilon_2, \varepsilon_2)^2 = \{ (x,y): x \in [0, \varepsilon_2), |y|=x^d \}.
\end{equation*}
Note that this curve with a cusp at the origin can be realized by gluing together two admissible curves $\Sigma^{(1)}$ and $\Sigma^{(2)}$ at the origin and hence, it falls into the class of networks considered in the present paper. 
In this case, the associated potential $V_\varepsilon$ is given by
\begin{equation*}
  V_\varepsilon = \begin{cases} 2 \alpha \varepsilon^{-1} & \text{ in }\Omega_\varepsilon^{(1)} \cap \Omega_\varepsilon^{(2)} \\ \alpha \varepsilon^{-1}  & \text{ in }(\Omega_\varepsilon^{(1)} \cup \Omega_\varepsilon^{(2)})  \setminus (\Omega_\varepsilon^{(1)} \cap \Omega_\varepsilon^{(2)}), \\ 0 & \text{ else;} \end{cases}
\end{equation*}
note that there exists a constant $c = c(\varepsilon) \in (0,1)$, $c(\varepsilon) \rightarrow 1$ for $\varepsilon \searrow 0$, such that $\Omega_\varepsilon^{(1)} \cap \Omega_\varepsilon^{(2)}$ is contained in the $\varepsilon$-neighborhood of $|y|=x^d$ for $x \in (0, c \varepsilon^{1/d})$; this is a larger set than in the previous two examples.
To formulate the next result, one has to introduce in $L^2(0, \infty)$ the differential operator
\begin{equation} \label{def_B}
  B f (x) = -f''(x) + x^d f(x)
\end{equation}
defined on functions that satisfy the Dirichlet condition $f(0) = 0$. Then, the following result is a consequence of the analysis from \cite{FP20}.

\begin{prop} \label{proposition_cusp}
  Let $M \in \mathbb{N}$ be fixed. Then, in the situation described above, there exists $\alpha = \alpha(M)$ sufficiently negative and $\varepsilon_3 = \varepsilon_3(\alpha(M), M)$ such that for all $\varepsilon \in (0, \varepsilon_3)$ the operator $H_{0, 0, V_\varepsilon}$ has $M$ negative eigenvalues $\lambda_{k,\varepsilon}$, $k \in \{ 1, \dots, M\}$, that behave as
  \begin{equation} \label{cusp_eigenvalues}
    \lambda_{k,\varepsilon} \approx -\alpha^2 + 2^{2/(d+2)} E_k(B) \alpha^{6/(d+2)},
  \end{equation}
  where $E_k(B)$ is the $k$-th eigenvalue of the operator $B$ defined by~\eqref{def_B}.
\end{prop}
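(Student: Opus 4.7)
The plan is to follow the same two-step strategy as in Propositions~\ref{proposition_magnetic} and~\ref{proposition_star_graph}: first invoke a spectral result from \cite{FP20} for the idealized Schr\"odinger operator $H_{0,0,\alpha}$ with a $\delta$-interaction of constant strength $\alpha$ supported on the cusped curve $\Sigma$, and then transfer it to the approximating operators $H_{0,0,V_\varepsilon}$ via Lemma~\ref{lemma_convergence_spectrum}.

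Before doing this, I would verify that $\Sigma$ fits into the framework of networks \eqref{Sigma_intro}. Writing $\Sigma = \Sigma^{(1)} \cup \Sigma^{(2)}$ where $\Sigma^{(1)}$ and $\Sigma^{(2)}$ are the upper and lower branches of $\Sigma$ meeting at the cusp point, each $\Sigma^{(k)}$ is a (relatively open subset of the boundary of) a compact $C^2$-smooth domain and hence an admissible $C^2$-hypersurface in the sense of Definition~\ref{definition_hypersurface}. Their intersection consists of the single cusp point and therefore has vanishing one-dimensional Hausdorff measure, as required.

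The core step is then to apply the asymptotic analysis of \cite{FP20}: for fixed $M \in \mathbb{N}$ and $\alpha = \alpha(M) < 0$ sufficiently negative, the self-adjoint operator $H_{0,0,\alpha}$ possesses at least $M$ discrete eigenvalues $\lambda_1(\alpha), \dots, \lambda_M(\alpha)$ below the threshold of its essential spectrum, and these obey the asymptotic expansion
\begin{equation*}
  \lambda_k(\alpha) = -\alpha^2 + 2^{2/(d+2)} E_k(B)\, \alpha^{6/(d+2)} + o\bigl(\alpha^{6/(d+2)}\bigr), \quad \alpha \to -\infty,
\end{equation*}
with $E_k(B)$ the $k$-th eigenvalue of the operator $B$ from~\eqref{def_B}. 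Since each $V^{(k)} = \tfrac{1}{2\beta}\alpha\,\chi_{\Sigma_\beta^{(k)}}$ is bounded and compactly supported, so is every $V_\varepsilon$, and hence by Weyl's theorem $\sigma_{\textup{ess}}(H_{0,0,V_\varepsilon}) = [0,\infty)$. Applying Lemma~\ref{lemma_convergence_spectrum}(i) to each of the eigenvalues $\lambda_k(\alpha(M))$ yields families $(\lambda_{k,\varepsilon})_{\varepsilon \in (0,\varepsilon_0)}$ of discrete eigenvalues of $H_{0,0,V_\varepsilon}$ with $\lambda_{k,\varepsilon} \to \lambda_k(\alpha(M))$ as $\varepsilon \searrow 0$. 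Picking $\varepsilon_3 = \varepsilon_3(\alpha(M), M) \in (0, \varepsilon_0)$ sufficiently small ensures that the numbers $\lambda_{1,\varepsilon}, \dots, \lambda_{M,\varepsilon}$ remain pairwise distinct and negative, which yields~\eqref{cusp_eigenvalues}.

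The main subtle point is the interplay between the two parameters involved. The asymptotic in \eqref{cusp_eigenvalues} is driven by $\alpha \to -\infty$, whereas Lemma~\ref{lemma_convergence_spectrum} only provides convergence as $\varepsilon \searrow 0$ for fixed coefficients. One must therefore first fix $\alpha = \alpha(M)$ large enough so that the idealized operator $H_{0,0,\alpha}$ exhibits the desired $M$-eigenvalue asymptotic with a negligible $o$-remainder, and only afterwards choose $\varepsilon_3$, depending on the already fixed $\alpha(M)$ and on the spectral gaps between the $\lambda_k(\alpha(M))$, so that the approximation error $|\lambda_{k,\varepsilon} - \lambda_k(\alpha(M))|$ granted by Theorem~\ref{theorem_approximation} is absorbed into the symbol $\approx$ in \eqref{cusp_eigenvalues}.
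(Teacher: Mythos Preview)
Your proposal is correct and follows essentially the same approach as the paper: invoke the asymptotic eigenvalue result from \cite{FP20} for $H_{0,0,\alpha}$ and transfer it to $H_{0,0,V_\varepsilon}$ via Lemma~\ref{lemma_convergence_spectrum}. Your additional remarks on the network structure of $\Sigma$, the essential spectrum via Weyl's theorem, and the order in which $\alpha$ and $\varepsilon_3$ must be chosen are accurate elaborations of points the paper leaves implicit or states before the proposition.
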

\begin{proof}
  First, by \cite[Theorem~1]{FP20} the Schr\"odinger operator $H_{0,0,\alpha}$  with a $\delta$-potential supported on $\Sigma$ has, for $\alpha < 0$ sufficiently negative, $M$ negative discrete eigenvalues $\lambda_{k,0}$, $k \in \{ 1, \dots, M \}$, that behave as
  \begin{equation*} 
    \lambda_{k,0} = -\alpha^2 + 2^{2/(d+2)} E_k(B) \alpha^{6/(d+2)} + \mathcal{O}(\alpha^{6/(d+2)-\eta}), 
  \end{equation*}
  where $\eta = \min\big\{ \frac{d-1}{2(d+2)}, \frac{2(d-1)}{(d+1)(d+2)} \big\}$.
  Thus, it follows from Lemma~\ref{lemma_convergence_spectrum} that for all sufficiently small $\varepsilon > 0$ the operator $H_{0, 0, V_\varepsilon}$ has $M$ negative eigenvalues $\lambda_{k,\varepsilon}$, $k \in \{ 1, \dots, M\}$, that behave as in~\eqref{cusp_eigenvalues}.
\end{proof}

\subsection*{Acknowledgement.}
The author thanks the anonymous reviewers for constructive feedback that helped to improve the paper as well as Jussi Behrndt and Petr Siegl for fruitful discussions.

\subsection*{Funding statement.}
This research was funded in part by the Austrian Science Fund (FWF) 10.55776/P 33568-N. For the purpose of open access, the author has applied a CC BY public copyright licence to any Author Accepted Manuscript version arising from this submission.

\subsection*{Data availability statement}
Data sharing not applicable to this article as no datasets were generated or analysed during the current study.

\subsection*{Competing Interests}

The author has no competing interests to declare that are relevant to the content of this article.



\end{document}